\g@addto@macro\bfseries{\boldmath}
\newcommand{\A}{\mathcal{A}}
\newcommand{\C}{\mathbb{C}}
\newcommand{\T}{\mathbb{T}}
\newcommand{\z}{\zeta}
\newcommand{\conj}[1]{\overline{#1}}
\newcommand{\D}{\mathbb{D}}
\newcommand{\B}{\mathcal{B}}
\newcommand{\m}{\textit{m}}
\newcommand{\BMOA}{\textbf{BMOA}}
\newcommand{\BMO}{\textbf{BMO}}
\renewcommand\Re{\operatorname{Re}}
\renewcommand\Im{\operatorname{Im}}
\newtheorem{thm}{Theorem}[section]
\newtheorem{thmletter}{Theorem}
\newtheorem{corletter}{Corollary}
\newtheorem{lemma}[thm]{Lemma}
\theoremstyle{definition}
\theoremstyle{definition}
\newtheorem*{problem*}{Problem}
\newtheorem*{Conj*}{Conjecture}
\begin{document}
\title{\textbf{Generalized Ces\`aro operators: geometry of spectra and quasi-nilpotency}}
\author{Adem Limani, Bartosz Malman}
\date{}
\maketitle

\begin{abstract}
\noindent For the class of Hardy spaces and standard weighted Bergman spaces of the unit disk we prove that the spectrum of a generalized Ces\`aro operator $T_g$ is unchanged if the symbol $g$ is perturbed to $g+h$ by an analytic function $h$ inducing a quasi-nilpotent operator $T_h$, i.e. spectrum of $T_h$ equals $\{0\}$. We also show that any $T_g$ operator which can be approximated in the operator norm by an operator $T_h$ with bounded symbol $h$ is quasi-nilpotent. In the converse direction, we establish an equivalent condition for the function $g \in \BMOA$ to be in the $\BMOA$-norm closure of $H^{\infty}$. This condition turns out to be equivalent to quasi-nilpotency of the operator $T_g$ on the Hardy spaces. This raises the question whether similar statement is true in the context of Bergman spaces and the Bloch space. Furthermore, we provide some general geometric properties of the spectrum of $T_{g}$ operators.
\end{abstract}

\section{Introduction}

Let $\D$ denote the unit disk of the complex plane $\C$. The generalized Ces\`aro operator with symbol $g: \D \to \C$ acts on an analytic function $f:\D \to \C$ by $$T_gf(z) = \int_0^z f(\z) g'(\z) \,d\z, \quad z \in \D.$$ We will for the most part be working in the context of $T_g$ acting on the Hardy spaces and the standard weighted Bergman spaces. It is a classical result in the theory that $T_g$ is bounded on the Hardy spaces $H^p,$ for $0 < p < \infty,$ if and only if the symbol $g$ lies in the space $\BMOA$ (see \cite{alemansiskakishardy}). The Hardy spaces $H^p$ are defined, as usual, to be the spaces of analytic functions in $\D$ which satisfy $$\|f\|_{H^p}^p := \sup_{0 < r < 1} \int_\T |f(r\z)|^p \,d\m(\z) = \int_\T |f(\z)|^p d\m(\z) < \infty,$$ where $d\m$ denotes the normalized Lebesgue measure on the unit circle $\T = \{ z : |z| = 1 \}$ and the last integral is defined by the boundary values of $f$. The space $\BMOA$ is the dual of $H^1$ under the usual Cauchy pairing \begin{equation*} \lim_{r \to 1^-} \int_\T f(\z)\conj{g(r\z)} d\m(\z), \quad f \in H^1, g \in \BMOA.
\end{equation*} The weighted Bergman space $L^{p,\alpha}_a$, for $0 < p < \infty$ and $\alpha > -1$, consists of analytic functions in $\D$ which satisfy $$\|f\|_{L^{p,\alpha}_a}^p := \int_{\D} |f(z)|^p (1-|z|^2)^\alpha dA(z) < \infty,$$ where $dA$ denotes the normalized area measure. In this case, the criterion for boundedness of $T_g$ is that the symbol $g$ belongs to the Bloch space $\B$ (see, for instance, \cite{acs}), which is the space of analytic functions in $\D$ satisfying $$\sup_{z \in \D} (1-|z|^2)|g'(z)| < \infty.$$  

The purpose of the present paper is to shed light on some spectral properties of $T_g$ operators, with special emphasis on the quasi-nilpotent operators.

We start by briefly describing a common approach that has been followed with the purpose of characterizing the spectrum of $T_g$ operators for several classes of Banach spaces $X$ of analytic functions. We will always implicitly assume that the evaluations $f \mapsto f(\lambda)$ are bounded on any space considered. Since $T_gf(0) = 0$ for any $g$ and $f$, if $X$ contains functions which do not vanish at $z = 0$, then $T_g$ is not surjective, and thus the point $0$ is always contained in the spectrum. It is also easy to see that the operator $T_g$ has no eigenvalues, and that for any $\lambda \in \C \setminus \{0\}$ and any analytic $h$, the equation $(I - \lambda^{-1}T_g)f = h$ has the unique solution 
\begin{equation} \label{resolventeq} f(z) = R_g(\lambda) h(z) := h(0)e^{\frac{g(z)}{\lambda}} + e^{\frac{g(z)}{\lambda}} \int_0^z e^{-\frac{g(\z)}{\lambda}} h'(\z) \,d\z.\end{equation} Therefore, the operator $T_g - \lambda I$ is invertible on a space $X$ precisely when the operator $R_g(\lambda):X \to X$ defined above is bounded on $X$. Of course, this implies the norm equivalence $\|f\|_X \simeq \|R_g(\lambda)f\|_X$ for $f \in X$. This simple but crucial observation has been fruitfully employed in the study of the spectrum in \cite{acs}, and later in \cite{alemanpelaezhardy} and \cite{inteqvolterra}, where connections have been established between the boundedness of $R_g(\lambda)$ and the theory of Muckenhoupt weights and B\'ekoll\'e-Bonami weights. These connections will be our principal tools when establishing the main results of this paper.

In the Hardy space setting the boundedness of the operator $R_g(\lambda)$ is equivalent to a certain weight satisfying a Muckenhoupt $\A_\infty$-condition. A weight for us will be a positive measurable function, and the $\mathcal{A}_\infty$-class will consist of weights $w$ on $\T$ for which there exists a constant $C > 0$ such that the following estimate holds for all arcs $I \subseteq \T$:
\begin{equation} \label{Ainftycond}
\frac{1}{m(I)} \int_I w \, d\m \leq C \exp\Bigg( \frac{1}{m(I)}\int_I \log w \,d\m\Bigg).
\end{equation} Here $m(I)$ denotes the normalized Lebesgue measure of the interval $I$. We will also need to consider the Muckenhoupt $\A_2$-class, i.e the weights satisfying 
\begin{equation} \label{Apcond}
\Bigg(\frac{1}{m(I)} \int_I w \,d\m\Bigg) \Bigg( \frac{1}{m(I)}\int_I w^{-1} \,d\m \Bigg) \leq C.
\end{equation}
It is well-known that $w\in \A_{2}$ if and only if $w, \, w^{-1}\in \A_\infty$. Moreover, $\A_\infty$ is closed under log-convex combination, in the sense that whenever $w_1, w_2 \in \A_\infty$ and $0 < r < 1$, then $w_1^rw_2^{1-r} \in \A_\infty$. 

The following result by Aleman and Pel\'aez characterizes the resolvent set $\rho(T_g | H^p)$ in terms of \eqref{Ainftycond}. 
\begin{thm}[Theorem C of \cite{alemanpelaezhardy}] \thlabel{Ainftyresolventcond} Assume that $\lambda \in \C \setminus \{0\}$, $0 < p < \infty$ and $g \in \BMOA$. Then, the following assertions are equivalent:

\begin{enumerate}[(i)]
\item $\lambda \in \rho(T_g|H^p)$,
\item $e^{g/\lambda} \in H^p$ and the weight $\exp(p \Re(g(e^{it})/\lambda))$ satisfies the $\A_\infty$-condition.
\end{enumerate}
\end{thm}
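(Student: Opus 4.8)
This statement is Theorem~C of \cite{alemanpelaezhardy}, so I will only outline the shape of a proof. By the remarks preceding it, $\lambda\in\rho(T_g|H^p)$ exactly when $R_g(\lambda)$ of \eqref{resolventeq} maps $H^p$ boundedly into itself, and then, since $I-\lambda^{-1}T_g$ is bounded on $H^p$ (because $g\in\BMOA$), one automatically has $\|R_g(\lambda)f\|_{H^p}\simeq\|f\|_{H^p}$. Testing $R_g(\lambda)$ on the constant function $1$ gives $R_g(\lambda)1=e^{g/\lambda}$, so (i) forces $e^{g/\lambda}\in H^p$. Since $g\in\BMOA\subset\bigcap_{0<q<\infty}H^q$, the function $\Re(g/\lambda)$ is the Poisson integral of its (integrable) boundary values; hence $e^{g/\lambda}$ is a zero-free \emph{outer} function, its boundary modulus to the $p$-th power is $w:=\exp\bigl(p\Re(g(e^{it})/\lambda)\bigr)$, and inside $\D$ one has $|e^{g(z)/\lambda}|^p=\exp\bigl(p\Re(g(z)/\lambda)\bigr)$, the exponential of the harmonic extension of $\log w$. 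In particular $e^{g/\lambda}\in H^p$ is equivalent to $w\in L^1(\T)$.

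Next I would strip \eqref{resolventeq} down to a weighted estimate. If $F=R_g(\lambda)f$ then $F'=\tfrac{g'}{\lambda}F+f'$, so $\bigl(e^{-g/\lambda}F\bigr)'=e^{-g/\lambda}f'$; thus, modulo the rank-one operator $f\mapsto f(0)e^{g/\lambda}$, $R_g(\lambda)$ is the primitivation operator conjugated by multiplication by $e^{\pm g/\lambda}$. Consequently, boundedness of $R_g(\lambda)$ on $H^p$ is equivalent to the two-sided estimate $\bigl\|f(0)e^{g/\lambda}+e^{g/\lambda}\,V(e^{-g/\lambda}f')\bigr\|_{H^p}\simeq\|f\|_{H^p}$ for $f\in H^p$, where $V$ denotes primitivation vanishing at the origin (the lower bound being automatic, as above). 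Factoring out the outer function $e^{g/\lambda}$, this becomes a comparison, after one integration and up to the harmless rank-one term, between the $H^p$-norm and the weighted norm $\bigl(\int_\T|\cdot|^p w\,d\m\bigr)^{1/p}$; informally, it asks that multiplication by $e^{g/\lambda}$ intertwine $H^p$ with the weighted Hardy space $H^p(w)$ and that the associated projection be bounded there.

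The decisive point, which is the real content of \cite{alemanpelaezhardy}, is that this weighted comparison holds precisely when $w\in\A_\infty$. The way \eqref{Ainftycond} enters is through its equivalence with the global reverse-Jensen estimate: for every $z\in\D$, the Poisson average of $w$ at $z$ is at most $C\exp\bigl(\int_\T P_z\log w\,d\m\bigr)=C\,|e^{g(z)/\lambda}|^p$, where $P_z$ is the Poisson kernel at $z$; this is exactly what is needed to dominate the Poisson extension of $w$ by $|e^{g/\lambda}|^p$ and run the norm estimates, together with standard $H^p$ machinery (boundedness of the Riesz projection for $p\geq1$ and its substitutes for $p<1$). In the converse direction one recovers \eqref{Ainftycond} by testing the norm equivalence against families of functions adapted to arcs $I\subseteq\T$. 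I expect the main obstacle to be exactly this translation between the ``analytic'' norm equivalence for $R_g(\lambda)$ and the ``real-variable'' $\A_\infty$ condition: the crux is controlling the discrepancy between $\exp\bigl(p\Re(g(z)/\lambda)\bigr)$, an exponential of a harmonic extension, and the Poisson integral of $w$ --- a gap closed exactly by \eqref{Ainftycond} --- uniformly in the full range $0<p<\infty$, which forces one to invoke genuine Muckenhoupt machinery rather than soft arguments. The remaining steps --- the rank-one term, outer/inner factorization, and passing between $f$ and $f'$ --- are routine once this core estimate is in place.
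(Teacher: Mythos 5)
This statement is quoted verbatim from \cite{alemanpelaezhardy} (Theorem~C there); the present paper offers no proof of it, so there is nothing internal to compare your argument against. Judged on its own terms, your outline gets the accessible parts right: the identification of $\rho(T_g|H^p)$ with boundedness of $R_g(\lambda)$, the automatic lower bound coming from boundedness of $I-\lambda^{-1}T_g$, the observation that $R_g(\lambda)1=e^{g/\lambda}$ forces $e^{g/\lambda}\in H^p$, the fact that $e^{g/\lambda}$ is outer with boundary modulus $w^{1/p}$ (so membership in $H^p$ reduces to $w\in L^1(\T)$ by Jensen's inequality), and the conjugation identity $\bigl(e^{-g/\lambda}F\bigr)'=e^{-g/\lambda}f'$ that reduces everything, modulo a rank-one term, to a weighted comparison. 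The equivalence you invoke between the arc form \eqref{Ainftycond} and the Poisson-average reverse-Jensen inequality is also a genuine and standard ingredient of the Aleman--Pel\'aez argument.

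However, as a proof the proposal has a real gap exactly where you flag one: the assertion that the two-sided comparison $\|he^{g/\lambda}\|_{H^p}\simeq\bigl(\int_\T|h|^pw\,d\m\bigr)^{1/p}$ (together with boundedness of the relevant projection/primitivation) holds if and only if $w\in\A_\infty$ is the entire content of the theorem, and you give no argument for either implication beyond naming the tools. In particular, the converse direction --- extracting \eqref{Ainftycond} from the norm equivalence by ``testing against families of functions adapted to arcs'' --- is where the hard work lies (the test functions must see both $w$ and $\log w$ on $I$, and the argument must survive the full range $0<p<\infty$ where the Riesz projection is unavailable); nothing in your sketch indicates how this is done. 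So the proposal is an accurate road map to the proof in \cite{alemanpelaezhardy}, but it is a citation with commentary rather than a proof, which is an acceptable resolution only because the paper itself treats the statement the same way.
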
 
A similar characterization has been obtained by Aleman and Constantin in the Bergman space setting, and was further refined by Aleman, Pott and Reguera in \cite{binfinity}. There the B\'ekoll\'e-Bonami weights appear instead, which are the Bergman space counterparts of the Muckenhoupt weights. The B\'ekoll\'e-Bonami class $B_2$ consists of weights on $\D$ satisfying an analogue of $\eqref{Apcond}$:

\begin{equation} \label{Bpcond}
\sup_{I\subset \mathbb{T}} \Bigg(\frac{1}{A(S_{I})} \int_{S_{I}} w \,dA \Bigg) \Bigg( \frac{1}{A(S_{I})}\int_{S_{I}} w^{-1} \,dA \Bigg) \leq C,
\end{equation} where $S_{I}$ denotes the usual Carleson square associated to the arc $I\subseteq \mathbb{T}$ with midpoint $\z_{I}$:
\begin{equation*}
S_{I} = \{ z \in \D : 1-m(I) < |z|, |\arg z - \arg \z_{I}| < m(I)/2 \},
\end{equation*} and $A(S_{I})$ is the normalized area measure of $S_{I}$. There is an analogue of the $\A_\infty$-class for the B\'ekoll\'e-Bonami weights which is often denoted by $B_\infty$ (see \cite{binfinity} for a precise definition of this class). The weights appearing in our context will be of the form $w(z) = |e^{g(z)}|$ with $g\in \B$, and in the case of such weights the results of \cite{binfinity} show that the $B_\infty$-class can be characterized in a similar way to \eqref{Ainftycond}: 

\begin{equation} \label{Binftycond}
\frac{1}{A(S_{I})} \int_{S_{I}} w \, dA \leq C \exp\Bigg( \frac{1}{A(S_{I})}\int_{S_{I}} \log w \,dA\Bigg).
\end{equation}

Similarly to the Muckenhoupt weights we have that $w, w^{-1} \in B_\infty$ if and only if $w \in B_2$, and $B_\infty$ is closed under log-convex combinations.

The condition that appears in the characterization of the spectrum of $T_g$ operators acting on the weighted Bergman spaces is the following. 

\begin{thm}[Part of Theorem A of \cite{acs}, Corollary 4.6 of \cite{binfinity}] \thlabel{B1resolventcond} Let $p > 0$ and $\alpha > -1$. For $\lambda \in \mathbb{C} \setminus \{0\}$ the following are equivalent:

\begin{enumerate}[(i)]
\item $\lambda \in \rho(T_g | L^{p,\alpha}_a),$
\item $e^{g/\lambda} \in L^{p,\alpha}_a$ and the weight $(1-|z|^2)^\alpha\exp(p \Re(g(z)/\lambda))$ satisfies the $B_\infty$-condition.
\end{enumerate}
\end{thm}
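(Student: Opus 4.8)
The plan is to run, in the Bergman setting, the argument underlying \thref{Ainftyresolventcond}: translate the spectral condition into boundedness of the resolvent operator $R_g(\lambda)$ of \eqref{resolventeq}, peel off the obvious necessary condition, and identify the rest with a weighted-norm estimate controlled by B\'ekoll\'e--Bonami theory. Since $g \in \B$, the operator $T_g$ is bounded on $X := L^{p,\alpha}_a$; hence, by the remarks following \eqref{resolventeq}, one has $\lambda \in \rho(T_g|X)$ precisely when $R_g(\lambda)$ maps $X$ boundedly into itself, in which case $R_g(\lambda) = -\lambda(T_g-\lambda I)^{-1}$. Applying $R_g(\lambda)$ to the constant function $1 \in X$ yields $R_g(\lambda)1 = e^{g/\lambda}$, so $e^{g/\lambda}\in X$ is necessary and we may assume it from now on. Replacing $g$ by $g-g(0)$ changes neither $T_g$, nor membership of $e^{g/\lambda}$ in $X$, nor the $B_\infty$-status of the weight in (ii) (which is only multiplied by a positive constant), so we normalize $g(0)=0$ and put $u := e^{g/\lambda}$, whence $u(0)=1$.

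Next I would bring $R_g(\lambda)$ into a form that exhibits the weighted structure. Integrating by parts in \eqref{resolventeq} and using $u' = \lambda^{-1} g' u$ gives, for every analytic $h$, the identity
\begin{equation*}
R_g(\lambda)h(z) \;=\; h(z) \;+\; \frac{1}{\lambda}\, u(z)\int_0^z u(\z)^{-1}\, h(\z)\, g'(\z)\,d\z ,
\end{equation*}
that is, $R_g(\lambda) = I + \lambda^{-1} M_u T_g M_{u^{-1}}$ as operators on the space of analytic functions in $\D$, where $M_\varphi$ denotes multiplication by $\varphi$. The identity summand being harmless, the problem becomes: decide when the conjugated Ces\`aro operator $M_u T_g M_{u^{-1}}$ --- well defined on analytic functions irrespective of the size of $u^{\pm 1}$ --- restricts to a bounded operator on $X$. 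As $g\in\B$ makes $g'$ comparable to $(1-|z|^2)^{-1}$, and $h\mapsto h'$ is, modulo constants, an isomorphism of $L^{p,\alpha}_a$ onto $L^{p,\alpha+p}_a$, this is a two-weight estimate for a weighted integration operator built from the pair $(u, u^{-1})$.

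At this stage I would invoke the weighted theory. Aleman and Constantin (Theorem~A of \cite{acs}) established the equivalence of $(i)$ with the conjunction of $e^{g/\lambda}\in L^{p,\alpha}_a$ (already in hand) and a B\'ekoll\'e--Bonami-type condition on the weight $(1-|z|^2)^\alpha |u|^p = (1-|z|^2)^\alpha\exp(p\Re(g/\lambda))$; then Aleman, Pott and Reguera (Corollary~4.6 of \cite{binfinity}) showed that, for the special weights $|u| = |e^{g/\lambda}|$ with $g/\lambda\in\B$, that condition is exactly the single $B_\infty$-inequality \eqref{Binftycond}. This mirrors \thref{Ainftyresolventcond}, where only the one-sided Muckenhoupt class $\A_\infty$ enters even though $R_g(\lambda)$ visibly involves $u^{-1}$: the analyticity of $g$, together with the reverse-H\"older self-improvement that $B_\infty$-membership forces on such exponential weights, prevents the reciprocal $e^{-g/\lambda}$ from contributing an independent condition.

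The decisive step --- and the main obstacle --- is precisely this last point: the passage from abstract boundedness of $M_u T_g M_{u^{-1}}$ on $L^{p,\alpha}_a$ to the geometric Carleson-square inequality \eqref{Binftycond}. This rests on the quantitative B\'ekoll\'e--Bonami machinery of \cite{binfinity} --- reverse-H\"older/self-improvement for weights of the form $|e^{g}|$ with $g\in\B$, and the collapse of the two-sided $B_2$-type condition to the one-sided $B_\infty$-condition for this class of weights --- which I would import rather than reprove. Everything else (the reduction to the resolvent, the necessity of $e^{g/\lambda}\in L^{p,\alpha}_a$, the integration by parts, and the elementary reformulations) is routine.
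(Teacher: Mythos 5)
This statement is quoted in the paper as a known result --- it is attributed to Theorem A of \cite{acs} and Corollary 4.6 of \cite{binfinity} and no proof is given --- and your proposal, after some correct preliminary reductions (the equivalence of $\lambda\in\rho(T_g|X)$ with boundedness of $R_g(\lambda)$, the necessity of $e^{g/\lambda}\in L^{p,\alpha}_a$ via $R_g(\lambda)1=e^{g/\lambda}$, and the conjugation identity $R_g(\lambda)=I+\lambda^{-1}M_uT_gM_{u^{-1}}$), ultimately defers the decisive weighted-norm step to those same two references, so it matches the paper's treatment. The only quibble is your aside that $g\in\B$ makes $g'$ ``comparable'' to $(1-|z|^2)^{-1}$ --- the Bloch condition gives only the upper bound --- but nothing in your argument rests on that.
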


We will use the form of the resolvent in \eqref{resolventeq} and the conditions of \thref{Ainftyresolventcond} and \thref{B1resolventcond}  in the proofs of our main results. The results are stated in Section \ref{resultsection}, together with a discussion. The proofs are deferred to Section \ref{proofsection}.

\section{Main results} \label{resultsection}

Our first main result is a spectral stability property. This is a version of a result which appears in context of so-called growth classes in \cite[Theorem 5.4]{inteqvolterra}.

\begin{thmletter} \thlabel{specstab}
Let $g, h$ be analytic functions such that $T_g, T_h: X \to X$ are bounded, where $X = H^p$ or $X = L^{p,\alpha}_a$, $0 < p < \infty$, and such that the spectrum $\sigma(T_h | X)$ equals $\{0\}$. Then $$\sigma(T_{g+h} | X) = \sigma(T_g | X).$$
\end{thmletter}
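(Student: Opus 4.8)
The plan is to reduce the equality of spectra to the equivalence of resolvent conditions from \thref{Ainftyresolventcond} and \thref{B1resolventcond}, exploiting the log-convexity of the relevant weight classes. Since $\sigma(T_h \mid X) = \{0\}$, every $\lambda \in \C \setminus \{0\}$ lies in $\rho(T_h \mid X)$, so by the cited theorems the weight $u_\lambda := \exp\bigl(p\Re(h/\lambda)\bigr)$ (in the Bergman case multiplied by $(1-|z|^2)^\alpha$, which I will carry along silently) satisfies the relevant $\A_\infty$ (resp. $B_\infty$) condition for \emph{every} $\lambda \neq 0$, and moreover $e^{h/\lambda}$ lies in the space. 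Fix now $\lambda \in \C \setminus \{0\}$. I want to show $\lambda \in \rho(T_g \mid X)$ if and only if $\lambda \in \rho(T_{g+h} \mid X)$, which by symmetry (replace $g$ by $g+h$ and $h$ by $-h$, noting $\sigma(T_{-h}) = \{0\}$ as well) reduces to one implication: if $\lambda \in \rho(T_g \mid X)$ then $\lambda \in \rho(T_{g+h} \mid X)$.

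So assume $\lambda \in \rho(T_g \mid X)$. Then $v_\lambda := \exp\bigl(p\Re(g/\lambda)\bigr)$ satisfies the $\A_\infty$ (resp.\ $B_\infty$) condition and $e^{g/\lambda} \in X$. The weight attached to $g+h$ at the parameter $\lambda$ is $\exp\bigl(p\Re((g+h)/\lambda)\bigr) = v_\lambda \cdot u_\lambda$, which is \emph{not} directly a log-convex combination of two members of the class. The fix is a scaling trick: for any $s > 1$, the weight $\exp\bigl(p\Re(g/(s^{-1}\lambda))\bigr) = v_\lambda^{\,s}$ equals $v_{\lambda/s}$, and by hypothesis $\lambda/s \in \rho(T_g)$ as well only if we knew the resolvent set were closed under such scalings — which we do not a priori. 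Instead I will use the cleaner route: for a parameter $t \in (0,1)$ write
\[
\exp\!\Bigl(p\Re\bigl(\tfrac{g+h}{\lambda}\bigr)\Bigr) \;=\; \Bigl(\exp\bigl(p\Re(\tfrac{g}{t\lambda})\bigr)\Bigr)^{t}\Bigl(\exp\bigl(p\Re(\tfrac{h}{(1-t)\lambda})\bigr)\Bigr)^{1-t},
\]
a genuine log-convex combination with weights $t$ and $1-t$. The first factor's base is the weight associated to $g$ at parameter $t\lambda$; the second factor's base is associated to $h$ at parameter $(1-t)\lambda$, which is in $\rho(T_h)$ for every choice of $t$ since $\sigma(T_h) = \{0\}$. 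For the first factor I need $t\lambda \in \rho(T_g \mid X)$, i.e.\ I need some $t \in (0,1)$ with $t\lambda$ in the resolvent set. But $\rho(T_g \mid X)$ is open and contains $\lambda$, so it contains $t\lambda$ for all $t$ sufficiently close to $1$; fix such a $t$. Then log-convexity of $\A_\infty$ (resp.\ $B_\infty$) gives that the weight of $g+h$ at $\lambda$ is in the class. It remains to check the membership condition $e^{(g+h)/\lambda} \in X$: this follows from writing $e^{(g+h)/\lambda} = e^{g/(t\lambda)\cdot t}\cdot e^{h/((1-t)\lambda)\cdot(1-t)}$ and applying H\"older's inequality with conjugate exponents $1/t$ and $1/(1-t)$ to the $L^p$ (resp.\ weighted $L^p$) integral, using $e^{g/(t\lambda)} \in X$ and $e^{h/((1-t)\lambda)} \in X$ — here one should be slightly careful with the $(1-|z|^2)^\alpha$ factor in the Bergman case, splitting it as $(1-|z|^2)^{\alpha t}(1-|z|^2)^{\alpha(1-t)}$ to match the H\"older decomposition. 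Having verified both conditions (ii) of the relevant theorem, we conclude $\lambda \in \rho(T_{g+h} \mid X)$.

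Running the same argument with the roles of $g$ and $g+h$ swapped (and $h$ replaced by $-h$) yields the reverse inclusion, hence $\rho(T_g \mid X) = \rho(T_{g+h} \mid X)$ and therefore $\sigma(T_g \mid X) = \sigma(T_{g+h} \mid X)$. The main obstacle is the one addressed by the scaling/openness maneuver: the product of two $\A_\infty$ weights need not be $\A_\infty$, so one cannot simply multiply the resolvent conditions for $g$ and $h$; the resolution is that the resolvent set is open, giving room to trade a small power away from the $g$-weight onto the (harmless, always-admissible) $h$-weight and land inside the log-convexity hypothesis. A minor secondary point is bookkeeping the Bergman weight $(1-|z|^2)^\alpha$ consistently through both the $B_\infty$ log-convex combination and the H\"older estimate, which is routine once the exponents are chosen to match.
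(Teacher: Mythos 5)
Your proposal is correct and follows essentially the same route as the paper: reduce to one inclusion by symmetry, use openness of the resolvent set to move the $g$-weight to a nearby parameter $t\lambda$, write the $(g+h)$-weight as a log-convex combination (your $t,1-t$ are the paper's $1/p',1/q'$), and finish the integrability condition with H\"older. Your remark on splitting $(1-|z|^2)^\alpha$ in the Bergman case is a detail the paper leaves implicit, but the argument is the same.
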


The above property has been previously observed in some special cases, for instance when $g'$ is a rational function and $h$ induces a compact $T_h$ operator. See, for instance, \cite[Theorem 5.2]{youngspectra} or \cite[Theorem B]{acs}.

Our next result extends the applicability of \thref{specstab} by identifying a large class of quasi-nilpotent $T_g$ operators. The result holds true in a much larger class of spaces than just the Hardy and Bergman spaces. 

\begin{thmletter} \thlabel{zerospec} Let $X$ be a Banach space of analytic functions in $\D$ which contains the constants and such that the algebra $B(X)$ of bounded linear operators on $X$ contains the multiplication operators $M_h$ and the generalized Ces\`aro operators $T_h$ whenever $h \in H^\infty$. Then we have that $\sigma(T_g|X) = \{0\}$ whenever $T_g$ lies in the norm-closure of $\{T_h : h \in H^\infty \}$ in $B(X)$. 
\end{thmletter}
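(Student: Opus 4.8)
The plan is to use the standard fact that the spectral radius of a bounded operator is governed by the norm: $\text{spr}(T) = \lim_n \|T^n\|^{1/n} \le \|T\|$. Since $T_g$ lies in the norm closure of $\{T_h : h \in H^\infty\}$, it suffices to show that every $T_h$ with $h \in H^\infty$ is quasi-nilpotent on $X$, i.e. $\sigma(T_h|X) = \{0\}$; then the spectral radius is continuous enough — more precisely, the spectrum is upper semicontinuous in the norm topology, but we need more, namely that the limit of quasi-nilpotents along a norm-convergent sequence is quasi-nilpotent, which follows immediately from $\text{spr}(T_g) \le \|T_g - T_h\| + \text{spr}(T_h) = \|T_g - T_h\|$ for every $h$, forcing $\text{spr}(T_g) = 0$. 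Combined with the observation already made in the introduction that $0 \in \sigma(T_g|X)$ whenever $X$ contains a function not vanishing at the origin (here the constants suffice), we get $\sigma(T_g|X) = \{0\}$. So the whole theorem reduces to the claim: if $h \in H^\infty$ then $T_h$ is quasi-nilpotent on $X$.

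To prove that claim, the key identity is the algebraic relation between $T_h$, the multiplication operator $M_h$, and the companion operator $S_h f(z) = \int_0^z h'(\zeta) f(\zeta)\,d\zeta$... actually more useful is: $T_h f + S_h f = M_h f - h(0) f$, where $S_h f (z)= \int_0^z f'(\zeta) h(\zeta)\,d\zeta$ is integration by parts, giving $T_h f(z) + \int_0^z f'(\zeta) h(\zeta)\,d\zeta = h(z)f(z) - h(0)f(0)$. Iterating $T_h$ and tracking the structure, one shows by induction that $T_h^n f(z) = \frac{1}{n!}\int_0^z f(\zeta)\, (g_n)'(\zeta)\,d\zeta$-type formulas do not directly close up, so instead I would use the commutation relation with multiplication: a direct computation gives $T_h M_h - M_h T_h = $ (a lower-order term), and more to the point, $T_h$ raised to the $n$-th power applied to a polynomial can be bounded by $\|h\|_{H^\infty}^n / n!$ times something, because each application of $T_h$ introduces an extra antiderivative. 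Concretely: writing $T_h f = \int_0^z h'f$, one checks $\|T_h^n\|$ acting on the monomials and uses that $T_h$ is, up to bounded perturbation by $M_h$ and $M_{h(0)}$, equal to $-S_h$, and $S_h^n f = \frac{1}{n!}(\text{something involving } h^n)$... The cleanest route: $T_h^n = (M_h - h(0)I - S_h)^n$ is not nilpotent termwise, so I would instead argue spectrally — by the resolvent formula \eqref{resolventeq}, for $h \in H^\infty$ and any $\lambda \neq 0$ we have $e^{\pm h/\lambda} \in H^\infty \subseteq X$ (assuming $X$ contains $H^\infty$, or at least that $M_{e^{h/\lambda}}$ and $M_{e^{-h/\lambda}}$ are bounded, which follows from the hypothesis since $e^{h/\lambda} \in H^\infty$), and then $R_h(\lambda) = M_{e^{h/\lambda}}\big(h(0)I + T_{\text{Id}}^{?}\,M_{e^{-h/\lambda}\cdot}(\cdot)'\big)$ is manifestly a composition of operators the hypothesis declares bounded, hence $\lambda \in \rho(T_h|X)$ for all $\lambda \neq 0$.

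That last argument is the one I would carry out carefully: rewrite $R_h(\lambda)f = M_{e^{h/\lambda}}\left(f(0)\mathbbm{1} + V\big(M_{e^{-h/\lambda}} (Df)\big)\right)$, where $V$ denotes the Volterra operator $Vu(z) = \int_0^z u(\zeta)\,d\zeta$ and $D$ is differentiation — except $D$ is unbounded, so one should not split it off. Better: note $e^{h/\lambda}\int_0^z e^{-h(\zeta)/\lambda} f'(\zeta)\,d\zeta$ should be recognized as $f - T_{h/\lambda}(\text{related function})$ via integration by parts, i.e. $\int_0^z e^{-h(\zeta)/\lambda}f'(\zeta)\,d\zeta = e^{-h(z)/\lambda}f(z) - f(0) + \frac{1}{\lambda}\int_0^z e^{-h(\zeta)/\lambda} h'(\zeta) f(\zeta)\,d\zeta$, so that $R_h(\lambda)f = f + \frac{1}{\lambda} M_{e^{h/\lambda}} T_h(M_{e^{-h/\lambda}} f)$. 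This exhibits $R_h(\lambda)$ as $I + \lambda^{-1} M_{e^{h/\lambda}} T_h M_{e^{-h/\lambda}}$, a product and sum of operators all bounded on $X$ by hypothesis (since $e^{\pm h/\lambda} \in H^\infty$ and $T_h$ is bounded), hence bounded, hence $\lambda \in \rho(T_h|X)$. Thus $\sigma(T_h|X) \subseteq \{0\}$, and equality holds because $0$ is always in the spectrum.

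The main obstacle is purely bookkeeping: verifying the integration-by-parts identity that yields $R_h(\lambda) = I + \lambda^{-1} M_{e^{h/\lambda}} T_h M_{e^{-h/\lambda}}$ cleanly, and checking that the hypotheses of the theorem (constants in $X$, $M_{e^{\pm h/\lambda}}$ and $T_h$ bounded for $H^\infty$ symbols) are exactly what is needed — in particular that $e^{\pm h/\lambda} \in H^\infty$ when $h \in H^\infty$, which is clear since $h$ is bounded. Once this identity is in hand, both the quasi-nilpotency of each $T_h$ and, via the spectral radius estimate, the norm-limit statement follow with no further work.
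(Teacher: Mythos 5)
Your second step---the verification that each individual $T_h$ with $h \in H^\infty$ is quasi-nilpotent---is correct, and the identity $R_h(\lambda) = I + \lambda^{-1} M_{e^{h/\lambda}} T_h M_{e^{-h/\lambda}}$ is exactly the right kind of factorization (it is the special case $g=h$ of the computation in the paper). The genuine gap is in the reduction that precedes it. The inequality $\mathrm{spr}(T_g) \leq \|T_g - T_h\| + \mathrm{spr}(T_h)$ is false for non-commuting operators: the spectral radius is only upper semicontinuous, and a norm limit of quasi-nilpotent operators need not be quasi-nilpotent. Already for $2\times 2$ matrices, $A = \left(\begin{smallmatrix} 0 & 0 \\ \epsilon & 0\end{smallmatrix}\right)$ and $B = \left(\begin{smallmatrix} 0 & 1/\epsilon \\ 0 & 0 \end{smallmatrix}\right)$ give $\mathrm{spr}(A+B) = 1$ while $\|A\| + \mathrm{spr}(B) = \epsilon$; and Kakutani's classical example produces nilpotent operators converging in norm to an operator of positive spectral radius. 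So ``every $T_h$ with $h \in H^\infty$ is quasi-nilpotent'' does not by itself yield the theorem: the passage to the norm closure is the actual content of the statement, and your justification of that passage collapses.

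The repair---which is the route the paper takes---is to fix $\lambda \neq 0$ \emph{first} and only then choose $h \in H^\infty$ with $\|T_{g-h}\|_{B(X)} < |\lambda|$, so that $I - \lambda^{-1}T_{g-h}$ is invertible by a Neumann series and hence $R_{g-h}(\lambda)$ is bounded. One then exploits the multiplicativity $e^{g/\lambda} = e^{h/\lambda} e^{(g-h)/\lambda}$ inside the resolvent formula \eqref{resolventeq}: the same integration by parts you perform shows that, for $f$ with $f(0)=0$,
\begin{equation*}
R_g(\lambda)f = M_{e^{h/\lambda}} R_{g-h}(\lambda) M_{e^{-h/\lambda}} f + M_{e^{h/\lambda}} R_{g-h}(\lambda) T_{e^{-h/\lambda}} f,
\end{equation*}
while $R_g(\lambda)1 = M_{e^{h/\lambda}} R_{g-h}(\lambda) 1$. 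This exhibits $R_g(\lambda)$ as a combination of operators that are bounded by hypothesis ($e^{\pm h/\lambda} \in H^\infty$) together with the Neumann-series inverse $R_{g-h}(\lambda)$. Your identity is precisely this one with $g-h=0$; what is missing from your argument is the observation that the same factorization works when $T_{g-h}$ is merely small in operator norm rather than zero, which is what converts ``$T_g$ is a norm limit of the $T_h$'' into boundedness of $R_g(\lambda)$ for every fixed $\lambda \neq 0$.
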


The conclusion of \thref{zerospec} also holds in the case of the metric spaces $H^p$ and $L^{p,\alpha}_a$ for $p \in (0,1)$, as will be clear from the proof given in Section \ref{proofsection}. The result is particularly useful in case that the space of symbols inducing bounded operators is known, as is the case for the Hardy and Bergman spaces. 
For instance, the following consequence is immediate from the well-known norm comparabilities $\|T_g\|_{H^p} \simeq \|g\|_{\BMOA}$ and $\|T_g\|_{L^{p,\alpha}_a} \simeq \|g\|_{\B}$.

\begin{corletter} \thlabel{zerospeccor} If $g$ lies in the norm-closure of $H^\infty$ in $\BMOA$ or in the norm-closure of $H^\infty$ in $\B$, then we have that $\sigma(T_g | H^p) = \{0\}$ and that $\sigma(T_g | L^{p,\alpha}_a )=\{0\}$, respectively.
\end{corletter}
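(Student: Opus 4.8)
The plan is to deduce this immediately from Theorem B by verifying that $X = H^p$ and $X = L^{p,\alpha}_a$ satisfy the hypotheses of that theorem and then transporting the symbol-level closure statement to the operator level via the stated norm comparabilities. First I would note that both $H^p$ and $L^{p,\alpha}_a$ are Banach spaces of analytic functions on $\D$ containing the constants (for $p \geq 1$; for $0 < p < 1$ one uses the metric-space version of Theorem B mentioned in the text), and that for $h \in H^\infty$ the multiplication operator $M_h$ is bounded on each of them — this is elementary, since $|h| \leq \|h\|_{H^\infty}$ pointwise, giving $\|M_h f\|_X \leq \|h\|_{H^\infty}\|f\|_X$. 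Likewise $H^\infty \subset \BMOA$ and $H^\infty \subset \B$, so the classical boundedness criteria recalled in the introduction (boundedness of $T_h$ on $H^p$ iff $h \in \BMOA$, and on $L^{p,\alpha}_a$ iff $h \in \B$) guarantee that $T_h \in B(X)$ for every $h \in H^\infty$. Thus the structural hypotheses of Theorem B hold for both families of spaces.

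Next I would invoke the norm comparabilities $\|T_g\|_{B(H^p)} \simeq \|g\|_{\BMOA}$ and $\|T_g\|_{B(L^{p,\alpha}_a)} \simeq \|g\|_{\B}$, which are stated in the excerpt as well-known. The key observation is that the map $g \mapsto T_g$ is linear in $g$, so $T_g - T_h = T_{g-h}$, and hence $\|T_g - T_h\|_{B(H^p)} \simeq \|g-h\|_{\BMOA}$ (respectively with $\B$ in place of $\BMOA$). Consequently, if $g$ lies in the $\BMOA$-norm closure of $H^\infty$, pick $h_n \in H^\infty$ with $\|g - h_n\|_{\BMOA} \to 0$; then $\|T_g - T_{h_n}\|_{B(H^p)} \to 0$, so $T_g$ lies in the operator-norm closure of $\{T_h : h \in H^\infty\}$ in $B(H^p)$. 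Theorem B then yields $\sigma(T_g | H^p) = \{0\}$. The Bergman-space assertion follows by the identical argument with $\B$ replacing $\BMOA$ and $L^{p,\alpha}_a$ replacing $H^p$.

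There is essentially no substantive obstacle here: the corollary is a formal consequence of Theorem B once one matches up the symbol topology with the operator topology. The only points requiring a word of care are (a) confirming that $H^\infty$ is contained in $\BMOA$ and in $\B$ so that the relevant $T_h$ are bounded and the closures are taken inside the correct symbol classes — both inclusions are standard, since a bounded analytic function has bounded mean oscillation and satisfies $(1-|z|^2)|g'(z)| \lesssim \|g\|_{H^\infty}$ by the Schwarz–Pick estimate — and (b) in the range $0 < p < 1$, appealing to the remark in the text that Theorem B's conclusion persists for the metric spaces $H^p$ and $L^{p,\alpha}_a$. Neither of these needs more than a sentence.
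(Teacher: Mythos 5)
Your proof is correct and is exactly the paper's argument: the paper derives Corollary~C as an immediate consequence of Theorem~B via the linearity $T_g - T_h = T_{g-h}$ and the norm comparabilities $\|T_g\|_{H^p}\simeq\|g\|_{\BMOA}$ and $\|T_g\|_{L^{p,\alpha}_a}\simeq\|g\|_{\B}$. Your additional verifications (the hypotheses of Theorem~B, the inclusions $H^\infty\subset\BMOA$ and $H^\infty\subset\B$, and the $0<p<1$ caveat) are all consistent with what the paper leaves implicit.
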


It is natural to ask what can be said about the converse statement. If $g \in \BMOA$ or $g \in \B$, and the operator $T_g$ is quasi-nilpotent on $H^p$ or on $L^{p,\alpha}_a$, is then $g$ necessarily contained in the closure of $H^\infty$ in $\BMOA$, or $\B$, respectively? The Bergman case is related to a long-standing open problem which will be discussed below. In the case of the Hardy spaces the converse does hold. In fact we prove a stronger statement, with a nice geometric flavour. 

\begin{thmletter} \thlabel{hinfclosurebmoa} Let $0<p<\infty$ and $g\in \BMOA$. If the spectrum $\sigma( T_{g} \, \lvert \, H^{p} )$ does not contain any non-zero points of the real and imaginary axes, then $g$ lies in the norm-closure of $H^{\infty}$ in $\BMOA$, and thus $\sigma( T_{g} \, \lvert \, H^{p} ) = \{0\}$.
\end{thmletter}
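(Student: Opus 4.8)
The plan is to prove the contrapositive via a two-step chain: first show that if $g$ is \emph{not} in the $\BMOA$-closure of $H^\infty$, then there is some genuinely non-real behaviour to exploit; and second, to directly produce a non-zero point of $\sigma(T_g|H^p)$ on one of the coordinate axes. Actually, it is cleaner to argue as follows. Suppose $\sigma(T_g|H^p)$ contains no non-zero point of the real or imaginary axes. By \thref{Ainftyresolventcond}, this means that for every real $\lambda \neq 0$ we have $e^{g/\lambda}\in H^p$ and the weight $w_\lambda := \exp(p\Re(g/\lambda))$ satisfies the $\A_\infty$-condition \eqref{Ainftycond}, and similarly for every purely imaginary $\lambda = it$, $t \in \R\setminus\{0\}$, the weight $\exp(p\Re(g/(it))) = \exp\bigl(p\,\Im(g)/t\bigr)$ lies in $\A_\infty$. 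So the hypothesis delivers: $\exp(s\Re g)\in \A_\infty$ for all real $s$, and $\exp(s\Im g)\in \A_\infty$ for all real $s$ (absorbing $p$ and the sign into $s$). Since $\A_\infty$ is closed under log-convex combinations and, more to the point, $w\in \A_\infty$ with $w^{-1}\in\A_\infty$ forces $w\in \A_2$, we get that $\exp(s\Re g)\in \A_2$ and $\exp(s\Im g)\in \A_2$ for \emph{every} real $s$, uniformly on compact sets of $s$ in a suitable quantitative sense.

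The key step is then to convert this ``exponential integrability in all directions'' into membership of $g$ in the $\BMOA$-closure of $H^\infty$. The mechanism I would use is the classical description via the Garnett--Jones theorem: the distance from a function $\varphi\in\BMO$ to $L^\infty$ is comparable to the infimum of those $\epsilon>0$ such that $\exp(\varphi/\epsilon)$ is an $\A_2$-weight (more precisely, $\operatorname{dist}_{\BMO}(\varphi,L^\infty)\simeq \inf\{\epsilon : e^{\varphi/\epsilon}\in \A_2\}$, with uniform constants). Applying this to $\varphi = \Re g$ and $\varphi = \Im g$: the fact that $\exp(s\,\Re g)\in\A_2$ for \emph{all} $s$ says $\operatorname{dist}_{\BMO}(\Re g, L^\infty) = 0$, i.e. $\Re g$ lies in the $\BMO$-closure of $L^\infty$, and likewise $\Im g$. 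Since $g = \Re g + i\,\Im g$ and the harmonic conjugation operator, while unbounded on $\BMO\to\BMO$, does behave well here — one must pass through the analytic projection. The honest route: a function $g\in\BMOA$ lies in the $\BMOA$-closure of $H^\infty$ if and only if $\operatorname{dist}_{\BMOA}(g, H^\infty)=0$, and one shows $\operatorname{dist}_{\BMOA}(g,H^\infty)\lesssim \operatorname{dist}_{\BMO}(\Re g, L^\infty)$ (or the $\Im$ version) by taking a bounded real function $u$ close to $\Re g$ in $\BMO$ and letting $h$ be the analytic function with $\Re h = u$ normalized suitably; then $h$ need not be in $H^\infty$, but $g - h$ has small real part in $\BMO$, hence — here one invokes that a $\BMOA$ function with $\Re$ part of small $\BMO$ norm has small distance to $H^\infty$, which follows from the $\A_2$/exponential-integrability characterization run in reverse together with the fact that $\exp(g-h)$ then generates an $\A_\infty$ weight for the right parameter. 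I would organize this as a standalone lemma: \emph{$g\in\BMOA$ is in the $\BMOA$-closure of $H^\infty$ iff $\exp(s\Re g)\in\A_\infty$ for every $s\in\R$ iff $\exp(s\Im g)\in\A_\infty$ for every $s\in\R$.} This lemma is presumably Theorem E or its supporting proposition in the paper (it is exactly the promised ``equivalent condition for $g\in\BMOA$ to be in the $\BMOA$-closure of $H^\infty$'').

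Once that lemma is in hand the theorem is immediate: the axis hypothesis plus \thref{Ainftyresolventcond} gives the exponential-$\A_\infty$ condition for all real $s$, the lemma gives $g$ in the $\BMOA$-closure of $H^\infty$, and then $\thref{zerospeccor}$ gives $\sigma(T_g|H^p)=\{0\}$. The main obstacle, and the part requiring genuine care, is the harmonic-conjugate / analytic-projection bookkeeping in the lemma: one cannot simply say ``$\Re g$ close to $L^\infty$ implies $g$ close to $H^\infty$'' without controlling the imaginary part, and the clean way around this is to avoid conjugation entirely and instead argue that if $\exp(s\Re g)\in\A_\infty$ for all $s$ then for each $\epsilon>0$ one can extract from the Garnett--Jones machinery an $H^\infty$ function $h$ with $\|g-h\|_{\BMOA}<\epsilon$ directly — e.g. via a John--Nirenberg-type exponential decay estimate on $g-h$ where $h$ is a suitable bounded truncation of $g$ built from the stopping-time/Carleson-box structure underlying \eqref{Ainftycond}. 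I would also need to double-check the reduction step: that controlling only the \emph{real} axis (equivalently only $\exp(s\Re g)\in\A_\infty$) already suffices, since $\Re g$ and $\Im g$ together determine $g$ up to an imaginary constant, which is harmless for both the $\BMOA$-distance and the spectrum; this is why invoking \emph{both} axes in the hypothesis is natural but perhaps not strictly necessary. The expected length is one supporting lemma (a page or two, the technical heart) plus a short paragraph deducing Theorem D.
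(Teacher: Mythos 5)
Your first half coincides with the paper's argument: the axis hypothesis together with \thref{Ainftyresolventcond} gives $\exp(s\Re g),\,\exp(s\Im g)\in\A_\infty$ for every real $s$, pairing $s$ with $-s$ upgrades these to $\A_2$-weights at every scale, and the Garnett--Jones distance formula \eqref{Gar} then yields $\inf_{h\in L^\infty}\norm{\Re g-h}_{\BMO}=\inf_{h\in L^\infty}\norm{\Im g-h}_{\BMO}=0$, hence $\inf_{h\in L^\infty}\norm{g-h}_{\BMO}=0$. The genuine gap is the step you yourself flag as ``the technical heart'' but never actually carry out: passing from ``$g$ lies in the $\BMO$-closure of $L^\infty$'' to ``$g$ lies in the $\BMOA$-closure of $H^\infty$.'' Both routes you sketch fail as written. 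If $u\in L^\infty$ is real with $\norm{\Re g-u}_{\BMO}<\epsilon$ and $h$ is the analytic completion of $u$, then $\norm{g-h}_{\BMOA}\lesssim\epsilon$ modulo constants (boundedness of conjugation on $\BMO$), but $h\notin H^\infty$ because the conjugate function of a bounded function need not be bounded; you have merely approximated $g$ by another unbounded $\BMOA$ function, which is circular. The ``bounded truncation of $g$'' alternative is not analytic, so it is not an $H^\infty$ candidate either, and ``running the $\A_2$ characterization in reverse'' on $g-h$ only controls a $\BMO$ norm, not a distance to $H^\infty$.

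The paper closes exactly this gap with a separate result, \thref{Sarason}: for $g\in\BMOA$ one has $\inf_{f\in H^\infty}\norm{g-f}_{\BMO}\le C\inf_{h\in L^\infty}\norm{g-h}_{\BMO}$. Its proof is by duality rather than by conjugation: normalizing $g(0)=0$ and taking $h_n\in L^\infty$ nearly optimal, the distance $\inf_{f\in H^\infty}\norm{h_n-f}_\infty$ equals $\sup\big\{\abs{\int_\T f h_n\,d\m} : f\in H^1_0,\ \norm{f}_{H^1}\le 1\big\}$ via the annihilator relation $(H^1_0)^\perp=H^\infty$; since the Szeg\"o projection $P$ is self-adjoint, satisfies $P(\conj{g})=0$, and maps $\BMO$ boundedly into $\BMOA$, this supremum is $\lesssim\norm{P(\overline{h_n-g})}_{\BMO}\lesssim\norm{h_n-g}_{\BMO}$, which is then made arbitrarily small. (This is also why the hypothesis on \emph{both} axes is used: one needs bounded, generally non-analytic, approximants to both $\Re g$ and $\Im g$ before the duality lemma converts the $L^\infty$-approximant $u_1+iu_2$ into an $H^\infty$ one.) Supplying this lemma, or an equivalent substitute, is necessary to complete your proof; the remainder of your proposal is sound.
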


In the statement of \thref{hinfclosurebmoa} the real and imaginary axes can be replaced by two arbitrary lines which intersect orthogonally at the origin. To the authors' best knowledge, all the explicit computations of spectra of $T_g$ on the Hardy spaces, for particular symbols $g$, reveal that if the spectrum is bigger than $\{0\}$, then it contains a disk with the origin on its boundary (see, for instance, \cite{ampcesaro}). This obviously implies that there are non-zero points in the spectrum which lie on either the real or imaginary axis.

\begin{figure} 
\centering
\begin{tikzpicture}[scale=1]
\begin{scope}
\pgfsetfillpattern{dots}{gray}

\filldraw[fill=gray!20] (-10,-6) to[out=20,in=230] (0,0) to[out=190,in=40] (-10,-6);
\end{scope}

\filldraw[black] (-10,-6) circle (2pt) node[anchor=east] {0};
\filldraw[black] (0,0) circle (2pt) node[anchor=west] {$\lambda$};
\filldraw[black] (-2,-6*1/5) circle (2pt) node[anchor=west] {$r_1\lambda$};
\filldraw[black] (-5,-6*1/2) circle (2pt) node[anchor=west] {$r_2\lambda$};
\filldraw[black] (-8,-6*4/5) circle (2pt) node[anchor=west] {$r_3\lambda$};
\draw [thin,domain=40.9:20] plot ({(4 + 36/25)^(1/2)*cos(\x) -10}, {(4 + 36/25)^(1/2)*sin(\x) -6}) node[anchor=north west] {$I_{r_3,\lambda}$};
\draw [thin,domain=39.5:22.3] plot ({(25 + 36/4)^(1/2)*cos(\x) -10}, {(25 + 36/4)^(1/2)*sin(\x) -6}) node[anchor=north west] {$I_{r_2,\lambda}$};
\draw [thin,domain=35.2:27] plot ({(64 + 36*16/25)^(1/2)*cos(\x) -10}, {(64 + 36*16/25)^(1/2)*sin(\x) -6}) node[anchor=north west] {$I_{r_1,\lambda}$};
\draw [->] (-10,-6) -- (-10, 2) node[above] {$\Im z$};
\draw [->] (-10,-6) -- (2, -6) node[right] {$\Re z$};
\end{tikzpicture}
\caption{Points in the shaded area belong to the spectrum.} \label{fig:fig1}
\end{figure}
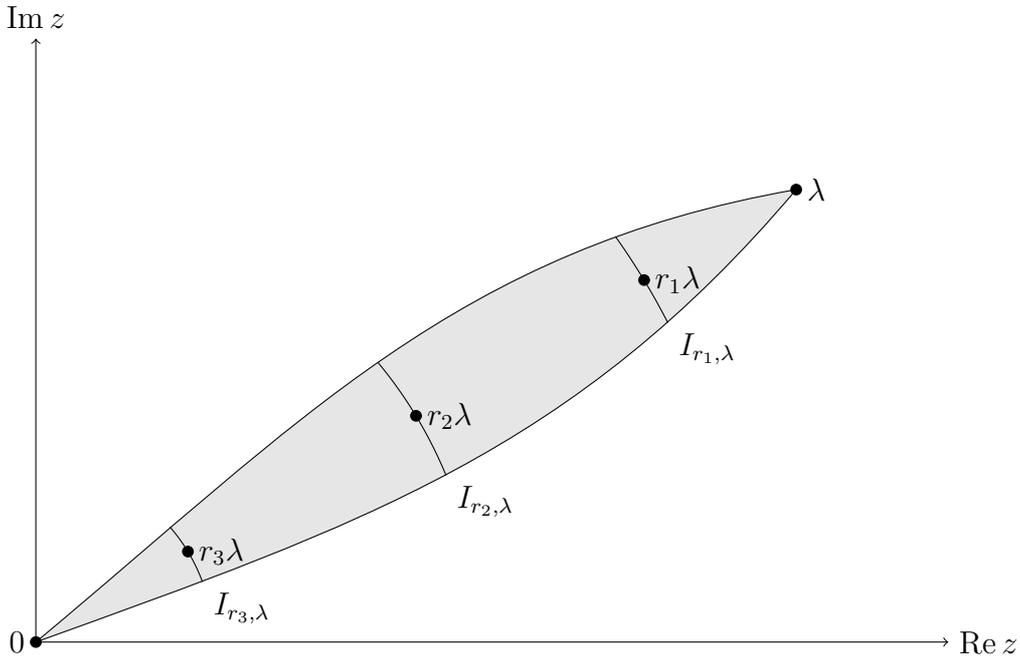

Our last main result provides further geometric properties of the spectrum. \thref{Ainftyresolventcond} and \thref{B1resolventcond} provide a deep link between the structure of the spectrum and the theory of exponential weights. It turns out that the convexity properties of the weight classes can be transferred to convexity properties of the spectrum.

\begin{thmletter}\thlabel{Geospec} Let $X= H^{p}$ or $X=L^{p,\alpha}_{a}$ for some $0<p< \infty$, and $T_g:X \to X$ be bounded. For every non-zero $\lambda \in \sigma (T_{g} \lvert X)$ and every $0< r <1$ there exists a circular arc $I_{r, \lambda}$ centered at $r\lambda$ (see Figure~\ref{fig:fig1}), such that the circular sector $S_{r,\lambda}$, created by taking the convex hull of the origin and $I_{r,\lambda}$ is contained in $\sigma(T_{g} \lvert  X)$. 
\end{thmletter}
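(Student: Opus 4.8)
The plan is to exploit the characterizations in Theorems~\ref{Ainftyresolventcond} and \ref{B1resolventcond}, which reduce membership of $\lambda$ in the spectrum to the failure of an exponential-weight condition ($\A_\infty$ in the Hardy case, $B_\infty$ in the Bergman case) for the weight $w_\lambda = \exp(p\,\Re(g/\lambda))$ (times $(1-|z|^2)^\alpha$ in the Bergman case), together with the integrability condition $e^{g/\lambda} \in X$. The key algebraic observation is that for a point $\mu = r\lambda$ on the segment from $0$ to $\lambda$ one has $\Re(g/\mu) = \Re(g/(r\lambda)) = \tfrac{1}{r}\Re(g/\lambda)$, so that $w_\mu = w_\lambda^{1/r}$; more generally, for a point $\mu$ with $\Re(1/\mu) = a\,\Re(1/\lambda)$ and $\Im(1/\mu) = b\,\Im(1/\lambda)$ lying in the same quadrant picture, $\Re(g/\mu)$ is a linear combination of $\Re(g/\lambda)$ and $\Im(g/\lambda) = -\Re(ig/\lambda)$. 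I would first record the identity $\Re(g/\mu) = c_1 \Re(g/\lambda) + c_2 \Re(ig/\lambda)$ with $c_1, c_2 \geq 0$ whenever $1/\mu$ lies in the closed "cone" spanned by $1/\lambda$ and $i/\lambda$ (equivalently $\overline{1/\mu}$ in the cone spanned by $\overline{1/\lambda}$ and its rotation), and note that this cone, viewed in the $\mu$-plane, is exactly a circular sector of the type claimed — the boundary circle through $0$ and $\lambda$ being the image of a line under $\mu \mapsto 1/\mu$.

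The heart of the argument is then a contrapositive/convexity step. Suppose some $\mu$ in the interior of the putative sector $S_{r,\lambda}$ were in the resolvent set. Then $w_\mu = w_\lambda^{c_1}\,(w_\lambda')^{c_2} \in \A_\infty$ (resp.\ $B_\infty$), where $w_\lambda' := \exp(p\,\Re(ig/\lambda))$. Using that $\A_\infty$ (resp.\ $B_\infty$) is closed under log-convex combinations $w_1^t w_2^{1-t}$, I would combine $w_\mu$ with a second resolvent point to force $w_\lambda \in \A_\infty$, contradicting $\lambda \in \sigma(T_g)$. The cleanest route: fix the non-zero $\lambda \in \sigma(T_g)$ and the radius $r$; I must produce \emph{some} genuine arc's worth of resolvent points near $r\lambda$ to start with — but actually the statement only asserts that the sector $S_{r,\lambda}$ \emph{is contained in} the spectrum, so there is nothing to produce: the job is purely to show $S_{r,\lambda} \subseteq \sigma(T_g)$. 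So assume $\mu \in S_{r,\lambda}$, $\mu \neq 0$, is in the resolvent set and derive a contradiction. Write $1/\mu = c_1/\lambda + c_2 (i/\lambda)$ with $c_1 > 0$, $c_2 \geq 0$ (this is what "$\mu$ in the sector" means after choosing the arc $I_{r,\lambda}$ to be the $1/\mu$-image of the segment of the appropriate line); by symmetry one also has the reflected representation using $-i/\lambda$. From $\mu$ in the resolvent set, $w_\mu \in \A_\infty$, i.e.\ $w_\lambda^{c_1} (w_\lambda')^{c_2} \in \A_\infty$. Also $\overline{w_\mu} \in \A_\infty$ is automatic since these are positive weights; the point is rather to use a \emph{second} point $\mu'$ in the sector, reflected about the ray through $\lambda$, giving $w_\lambda^{c_1}(w_\lambda')^{-c_2}$... this is where I need to be careful, since $(w_\lambda')^{-c_2}$ is not obviously in $\A_\infty$.

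Let me restructure: the correct mechanism is that along the \emph{segment} $0$ to $\lambda$ one has $w_{r\lambda} = w_\lambda^{1/r}$, and $\A_\infty$ is closed under the map $w \mapsto w^s$ for $0 < s < 1$ (a standard fact, following from log-convexity with the trivial weight, or from the self-improvement of $\A_\infty$). So $\lambda \in \rho(T_g) \iff w_\lambda \in \A_\infty \implies w_\lambda^{1/r} \in \A_\infty$?—no, $1/r > 1$, wrong direction. Conversely if $r\lambda \in \rho(T_g)$ then $w_\lambda^{1/r} \in \A_\infty$, hence $w_\lambda = (w_\lambda^{1/r})^{r} \in \A_\infty$ (now $0<r<1$, right direction), so $\lambda \in \rho(T_g)$ — contradiction. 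This shows the whole segment $(0,\lambda]$ lies in $\sigma(T_g)$, and the same works for the $e^{g/\lambda} \in X$ condition since $|e^{g/(r\lambda)}| = |e^{g/\lambda}|^{1/r} \geq |e^{g/\lambda}|$ pointwise won't directly give integrability... hmm, actually $e^{g/\lambda}\in X$ for $\lambda\in\rho$ is needed, and $e^{g/(r\lambda)}=|e^{g/\lambda}|^{1/r}$ could fail to be in $X$; but since we only need the \emph{negation} (to show $r\lambda\in\sigma$ we note: if $r\lambda\in\rho$ then both conditions hold for $r\lambda$, and the $\A_\infty$ one propagates to give $\lambda\in\rho$ — we don't need to propagate integrability). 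Finally, to get the full two-dimensional sector rather than just the segment, I combine: for $\mu$ with $1/\mu = c_1/\lambda + c_2(i/\lambda)$, if $\mu\in\rho$ then $w_\mu = w_\lambda^{c_1}(w_\lambda')^{c_2}\in\A_\infty$; pairing with the conjugate-quadrant point $\mu''$, $1/\mu'' = c_1/\lambda - c_2(i/\lambda)$, which also lies in the sector (the sector is symmetric about the ray $\mathbb{R}^+\lambda$ by construction of $I_{r,\lambda}$), gives $w_\lambda^{c_1}(w_\lambda')^{-c_2}\in\A_\infty$, and then the log-convex combination with exponent $1/2$ yields $w_\lambda^{c_1}\in\A_\infty$, hence $w_\lambda\in\A_\infty$ (as $0<c_1\le 1$ on the sector, or by taking a further power), i.e.\ $\lambda\in\rho$ — contradiction. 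The Bergman case is identical with $\A_\infty$ replaced by $B_\infty$ and the extra factor $(1-|z|^2)^\alpha$ carried along harmlessly (it cancels in the log-convex combination exactly as in the proof of \thref{specstab}-type arguments).

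The main obstacle I anticipate is book-keeping the geometry: verifying precisely that the set of $\mu$ for which $1/\mu$ lies in the closed cone spanned by $1/\lambda$ and $i/\lambda$ (intersected with the half-plane cutting it off at radius $r$) is a \emph{circular sector} of the asserted shape — that the arc $I_{r,\lambda}$ is genuinely a circular arc centered at $r\lambda$ — and pinning down the endpoints so that the two symmetric representations above are simultaneously available with nonnegative coefficients. This is elementary Möbius/inversion geometry (lines through $0$ map to lines, the cone's two bounding rays map to two circles through $0$, and the radial cut maps to a circle centered appropriately), but it requires care to state cleanly. Everything else is a direct application of the log-convexity of $\A_\infty$ and $B_\infty$ together with Theorems~\ref{Ainftyresolventcond} and \ref{B1resolventcond}.
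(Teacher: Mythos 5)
Your overall strategy --- reducing to the weight characterizations of \thref{Ainftyresolventcond} and \thref{B1resolventcond}, writing $\Re(g/\mu)$ as a nonnegative combination of $\Re(g/\lambda)$ and $\Re(ig/\lambda)$, and exploiting log-convexity of $\A_\infty$ --- is the same as the paper's, and your treatment of the radial segment $(0,\lambda]$ is essentially the paper's Lemma \ref{starprop} (note only that you do in fact need to propagate the integrability condition $e^{g/\mu}\in X$, which works by H\"older since $|e^{g/\lambda}|=|e^{g/(r\lambda)}|^{r}$ with $0<r<1$). However, the step that produces the two-dimensional sector has a genuine gap. You assume a single point $\mu$ of the sector lies in the resolvent set and then invoke the reflected point $\mu''$ with $1/\mu''=c_1/\lambda-c_2(i/\lambda)$, claiming $w_\lambda^{c_1}(w_\lambda')^{-c_2}\in\A_\infty$ because ``$\mu''$ also lies in the sector.'' Lying in the sector gives you nothing: the sector is precisely the set you are trying to place \emph{inside the spectrum}, so membership of $\mu''$ in the sector cannot yield $\mu''\in\rho(T_g)$, which is what the $\A_\infty$ conclusion for $w_{\mu''}$ requires. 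You flagged this worry yourself (``$(w_\lambda')^{-c_2}$ is not obviously in $\A_\infty$'') but the restructured argument does not resolve it --- it silently converts ``in the sector'' into ``in the resolvent set.''

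The missing ingredient, which is how the paper closes exactly this hole, is to obtain the factor $\exp(\pm c\,\Im(g)/\lambda)\in\A_\infty$ not from a second point of the sector but from points on the imaginary axis of modulus \emph{larger than the spectral radius}: such points are automatically in $\rho(T_g)$, so \thref{Ainftyresolventcond} gives $\exp(\Im(g)/s)\in\A_\infty$ for all real $s$ with $|s|>|\sigma(T_g)|$, and since both signs of $s$ are available this factor is genuinely an $\A_2$ weight that can be divided out by a log-convex combination. This is also what determines the finite angular extent of the arc $I_{r,\lambda}$ (one needs $|\sin t|$ small enough that $r(1-r')\lambda/(r'|\sin t|)$ exceeds the spectral radius), a quantitative feature your proposal never accounts for --- indeed your cone construction would yield a region independent of the size of $\sigma(T_g)$, which the broken reflection step cannot deliver. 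With the imaginary-axis resolvent points inserted in place of the reflected point $\mu''$, and the exponents budgeted as $w_t^{r'}\cdot(v_t^{-r'/(1-r')})^{1-r'}$ so that they sum to one, your argument becomes the paper's proof.
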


The proof of \thref{hinfclosurebmoa} ultimately relies on distance formulas of Jones and Garnett which are a consequence of the well-known equivalence of the $\A_2$-condition and the Helson-Szeg\"o condition (see Chapter VI of \cite{garnett}). A similar equivalence is not known in the case of B\'ekoll\'e-Bonami class $B_2$. In particular the missing link is the rightmost inequality of $(\ref{Gar})$ and consequently the problem of establishing the converse of \thref{zerospeccor} in the case of the Bergman spaces remains unsolved. 

\begin{Conj*} Let $g \in \B$ be such that the weights $$w_\lambda(z) := \exp( \Re(g(z)/\lambda))$$ satisfy \eqref{Binftycond} for all $\lambda \in \mathbb{C} \setminus \{0\}$. Then $g$ lies in the closure of $H^\infty$ in the Bloch norm.
\end{Conj*}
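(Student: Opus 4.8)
\emph{A possible line of attack towards the Conjecture.} The natural plan is to transplant the proof of \thref{hinfclosurebmoa} from the Hardy space to the Bergman space setting, replacing $\A_\infty \leadsto B_\infty$, $\A_2 \leadsto B_2$ and $\BMOA \leadsto \B$ throughout. The first step is purely formal: since $w_{-\lambda} = w_\lambda^{-1}$, the hypothesis that $w_\lambda = \exp(\Re(g/\lambda))$ satisfies $\eqref{Binftycond}$ for \emph{all} $\lambda \in \C\setminus\{0\}$ forces both $w_\lambda$ and $w_\lambda^{-1}$ into $B_\infty$, hence $w_\lambda \in B_2$ for every $\lambda \neq 0$. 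Parametrizing $\lambda = e^{i\theta}/s$ with $s>0$, $\theta\in[0,2\pi)$ then yields that the exponentials $\exp\!\big(s\,\Re(e^{-i\theta}g)\big)$ lie in $B_2$ for every $s\in\mathbb{R}$ and every $\theta$, with constants that are locally uniform in $s$. (This is also exactly the weight-theoretic content extracted from \thref{B1resolventcond} when $\sigma(T_g\,|\,L^{p,\alpha}_a)=\{0\}$, once one absorbs the factor $(1-|z|^2)^\alpha$ and the exponent $p$ using that $(1-|z|^2)^\alpha$ is itself a B\'ekoll\'e--Bonami weight and that $B_\infty$ is closed under log-convex combinations; so a proof of the Conjecture would indeed settle the converse of \thref{zerospeccor}.)

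Everything then reduces to a \emph{B\'ekoll\'e--Bonami analogue of the Garnett--Jones distance formula}: for $g\in\B$ one would like to prove
$$\mathrm{dist}_{\B}(g,H^{\infty}) \asymp \Big(\sup\{\, s>0 : \exp(s\,\Re(e^{-i\theta}g)) \in B_2 \text{ for all } \theta,\text{ with uniform constant}\,\}\Big)^{-1},$$
or at least the rightmost ("hard") inequality of this chain, which is the part of $(\ref{Gar})$ that has no known Bergman counterpart. In the situation of the Conjecture the supremum is $+\infty$, and the hard inequality would give $\mathrm{dist}_{\B}(g,H^{\infty})=0$ directly. Thus the whole problem is concentrated in one analytic statement: \emph{if all exponentials $\exp(s\,\Re(e^{-i\theta}g))$, $s\in\mathbb{R}$, $\theta\in[0,2\pi)$, are B\'ekoll\'e--Bonami $B_2$-weights, then $g$ is approximable in the Bloch norm by bounded analytic functions.}

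The main obstacle is exactly this, and the reason is the one flagged after $\eqref{Gar}$: there is no known Helson--Szeg\"o-type description of the class $B_2$. In the Hardy case "$\exp(s\Re g)\in\A_2$ for large $s$" is converted, via the Helson--Szeg\"o factorization and the boundedness of harmonic conjugation from $L^{\infty}$ to $\BMO$, into a decomposition of $\Re g$ as a bounded function plus a $\BMO$-function of norm $O(1/s)$, from which a bounded approximant of $g$ in the $\BMOA$ norm is manufactured. The analogue would require factoring $w\in B_2$ through the Bergman projection (or a suitable Beurling-type transform) with sharp constants, and the relevant singular integral operators behave considerably worse on $L^{\infty}$ and $\B$ than the Hilbert transform does on $L^{\infty}$ and $\BMOA$; no such factorization is available.

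I see two plausible ways around this, each of which is where essentially all the difficulty would lie. First, one could try to prove the required distance estimate directly, bypassing any factorization: run a stopping-time argument on the dyadic Carleson squares $S_I$ at the level sets of $(1-|z|^2)|g'(z)|$, and use $\eqref{Binftycond}$ on each stopped square to control the "large" part of $g'$ and build an explicit $g_0\in H^{\infty}$ with $\|g-g_0\|_{\B}$ as small as desired — the analogue of the proof of the Garnett--Jones formula that avoids the full strength of Helson--Szeg\"o. Second, one could exploit the fact that the admissible weights $w=|e^{g}|$ with $g\in\B$ are "regular" in the sense of \cite{binfinity}, and for which the clean characterization $\eqref{Binftycond}$ of $B_\infty$ holds, in order to establish a Helson--Szeg\"o-type factorization restricted to this subclass, which may be far more tractable than the general $B_2$ problem. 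The remaining ingredients — the reduction via \thref{B1resolventcond}, the log-convexity of $B_\infty$, and the passage between the weight statement and the Bloch approximation once the distance formula is in hand — are routine and parallel the Hardy case verbatim.
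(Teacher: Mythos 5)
This statement is stated in the paper as a \emph{Conjecture}, and the paper offers no proof of it: the authors explicitly remark that the missing ingredient is the rightmost inequality of $(\ref{Gar})$ in the B\'ekoll\'e--Bonami setting, and that the problem of characterizing the closure of $H^\infty$ in $\B$ has been open since \cite{pommerenke}. Your proposal is therefore not a proof but a roadmap, and you say as much yourself. To your credit, the roadmap is the correct one and coincides with the authors' own diagnosis: the reduction via $w_{-\lambda}=w_\lambda^{-1}$ to membership in $B_2$ for all $\lambda$, the reformulation as a Garnett--Jones-type distance formula $\mathrm{dist}_{\B}(g,H^\infty)\asymp \lambda(g)^{-1}$ for B\'ekoll\'e--Bonami weights, and the identification of the absence of a Helson--Szeg\"o-type description of $B_2$ as the precise obstruction all match the discussion surrounding $(\ref{Gar})$ in the paper. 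The preliminary reduction from \thref{B1resolventcond} (absorbing $(1-|z|^2)^\alpha$ and the exponent $p$ via log-convexity of $B_\infty$) is also sound and parallels the computations in the proof of Lemma \ref{starprop}.

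The genuine gap is that neither of your two proposed routes is executed, and each conceals the entire difficulty. The stopping-time construction of an explicit $g_0\in H^\infty$ from the level sets of $(1-|z|^2)|g'(z)|$ is precisely the kind of argument that is not known to work: the known characterizations of closures in $\B$ (Gal\'an--Nicolau for $H^p$, cited in the paper) fail for $H^\infty$ by the counterexample of \cite{galanopoulos2015closure}, which suggests that no naive truncation of $g'$ produces a bounded approximant. Your second route (a Helson--Szeg\"o factorization restricted to regular weights $|e^g|$, $g\in\B$) is plausible but is itself an open problem at least as hard as the conjecture. A further point you pass over too quickly: even granting a distance formula for $\mathrm{dist}_{\B}(\Re(e^{-i\theta}g), L^\infty\text{-type objects})$, you still need a Bloch analogue of Lemma \thref{Sarason} to pass from harmonic to analytic approximants; the paper's proof of that lemma uses the duality $(H^1_0)^\perp = H^\infty$ and the Szeg\"o projection on $\BMO$, and while the Bergman projection does map $L^\infty$ onto $\B$, the corresponding comparability of distances would need to be established. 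In short, you have correctly located where the proof must go, but no step beyond the formal reductions is actually proved.
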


With the characterization of \thref{B1resolventcond} in mind, the conjecture asserts that the quasi-nilpotency of $T_g$ on the Bergman spaces is equivalent to $g$ lying in the closure of $H^\infty$ in the Bloch space. The problem of characterizing this closure has been first stated in \cite{pommerenke} and remains open to this date. However, there exist results by Gal\'an and Nicolau \cite{galan2011closure} where a characterization of the closure of the $H^{p}$-spaces in the Bloch space is obtained in terms of square-type functions. This characterization does unfortunately not extend to $H^{\infty}$, as was shown by a counterexample in \cite{galanopoulos2015closure}.

\section{Proofs} \label{proofsection}

\begin{proof}[Proof of \thref{specstab}] We treat the case when $T_g$ and $T_h$ act on the $H^p$-spaces. We shall prove the inclusion $\rho(T_{g+h} | H^p) \subseteq \rho(T_g | H^p)$, from which the claim follows by considering $g+h$ and $-h$ instead of $g$ and $h$. 

Fix $\lambda \in \rho(T_g | H^p)$. Then by \thref{Ainftyresolventcond} the weight $\exp(p \Re(g(e^{it})/\lambda))$ satisfies the condition \eqref{Ainftycond}, and since the set $\rho(T_g | H^p)$ is open, the same holds for the weight $$w_1(e^{it}) = \exp(pp'\Re(g(e^{it})/\lambda))$$ whenever $p' > 1$ sufficiently close to 1. On the other hand, the weight $$w_2(e^{it}) = \exp(pq'\Re(h(e^{it})/\lambda))$$ satisfies the condition \eqref{Ainftycond} for all real $q'$ by \thref{Ainftyresolventcond} and the assumption that $\sigma(T_h | H^p) = \{0\}$. Let us now fix $p', q' > 1$ such that $1/p' + 1/q' = 1$ and so that $w_1$ satisfies \eqref{Ainftycond}. Let $$w(e^{it}) =  \exp(p \Re((g(e^{it})+h(e^{it}))/\lambda)).$$ Note that $w = w_1^{1/p'}w_2^{1/q'}$.  To show that $\lambda \in \rho(T_{g+h} | H^p)$ we now verify \eqref{Ainftycond} for $w$:

\begin{gather*}
\frac{1}{m(I)} \int_I w \, d\m \leq \Big(\frac{1}{m(I)} \int_I w_1 \, d\m\Big)^{1/p'}\Big(\frac{1}{m(I)} \int_I w_2 \, d\m\Big)^{1/q'} \\ \leq C \exp\Big( \frac{1}{m(I)}\int_I (1/p')\log w_1 \,d\m\Big) \cdot \exp\Big( \frac{1}{m(I)}\int_I (1/q')\log w_2 \,d\m\Big) \\ = C \exp\Big( \frac{1}{m(I)}\int_I (1/p') \log w_1 + (1/q') \log w_2 \,d\m\Big) \\ = C \exp\Big( \frac{1}{m(I)}\int_I \log w \,d\m\Big).
\end{gather*}

We need also verify that $e^{(g+h)/\lambda} \in H^p$. With the above choices of $p'$ and $q'$, we have that $e^{p'g/\lambda} \in H^p$ and $e^{q'h/\lambda} \in H^p$. The claim that $e^{(g+h)/\lambda} \in H^p$ now follows in a similar fashion from H\"older's inequality: \begin{gather*}\int_\T |e^{(g+h)/\lambda}|^p d\m \leq \Big(\int_\T |e^{p'g/\lambda}|^p \,d\m \Big)^{1/p'}\Big(\int_\T |e^{q'h/\lambda}|^p \,d\m \Big)^{1/q'} < \infty.
\end{gather*}

This proves the theorem for the case $X = H^p$. The case $X = L^{p,\alpha}_a$ is treated in an analogous way, by using instead the characterization of \thref{B1resolventcond}, the openness of the resolvent set and H\"older's inequality. We leave out the details of the computation which are similar to the above one.
\end{proof}

\begin{proof}[Proof of \thref{zerospec}] Fix $\lambda \in \mathbb{C} \setminus \{0\}$. Pick $h \in H^\infty$ with $\|T_g - T_h\|_{B(X)} = \|T_{g-h}\|_{B(X)}$ small enough for the operator $T_{g-h} - \lambda I$ to be invertible. For the constant function $1 \in X$ we have $$R_g(\lambda)1 = e^{\frac{h}{\lambda}}e^{\frac{g-h}{\lambda}} = M_{e^{h/\lambda}}R_{g-h}(\lambda)1 \in X.$$ On the other hand, if $f \in X$ with $f(0) = 0$, then 
\begin{gather*} R_g(\lambda)f(z) = e^{\frac{g(z)}{\lambda}} \int_0^z e^{-\frac{g(\z)}{\lambda}} f'(\z) \,d\z \\ = 
e^{\frac{h(z)}{\lambda}}e^{\frac{g(z)-h(z)}{\lambda}} \int_0^z e^{-\frac{g(\z)-h(\z)}{\lambda}} e^{-\frac{h(\z)}{\lambda}}f'(\z) \,d\z \\ = e^{\frac{h(z)}{\lambda}}e^{\frac{g(z)-h(z)}{\lambda}} \int_0^z e^{-\frac{g(\z)-h(\z)}{\lambda}} \Big[(e^{-\frac{h(\z)}{\lambda}}f(\z))' - e^{-\frac{h(\z)}{\lambda}}\frac{h'(\z)}{\lambda}f(\z) \Big] \,d\z\\ = M_{e^{h/\lambda}}R_{g-h}(\lambda)M_{e^{-h/\lambda}}f(z) + M_{e^{h/\lambda}}R_{g-h}(\lambda) T_{e^{-h/\lambda}}f(z).
\end{gather*}
Note that since $h \in H^\infty$, also $e^{\pm h/\lambda} \in H^\infty$ and thus the last line above is a sum of compositions of bounded operators on $X$. The operator $R_g(\lambda)$ is therefore itself bounded on $X$.
\end{proof}

In order to prove \thref{hinfclosurebmoa}, we will need a lemma which establishes the comparability of the distance to $H^{\infty}$ of a $\BMOA$-function in $\BMO$-norm to the distance to $L^{\infty}$ in $\BMO$-norm. According to \cite{garnett}, such a result seems to originate back to the work of D. Sarason in  \cite{zhu2007operator}. However, we have not been able to find an explicit proof of this result. For the sake of being self-contained, we include a short proof.

\begin{lemma} \thlabel{Sarason}There exists $C>0$, such that whenever $g\in \BMOA$,  
\begin{equation} \inf_{f \in H^{\infty}} \norm{g-f}_{\BMO} \leq C \inf_{h \in L^{\infty}} \norm{g-h}_{\BMO}
\end{equation}
\end{lemma}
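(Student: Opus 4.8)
The plan is to prove the comparability by a duality/decomposition argument. The nontrivial inequality is to bound the distance to $H^\infty$ in terms of the distance to $L^\infty$; the reverse inequality is trivial since $H^\infty \subseteq L^\infty$. So fix $g \in \BMOA$ and let $\delta := \inf_{\varphi \in L^\infty} \norm{g - \varphi}_{\BMO}$. Given $\epsilon > 0$, choose $\varphi \in L^\infty$ with $\norm{g - \varphi}_{\BMO} < \delta + \epsilon$. The idea is to replace the (merely bounded, not analytic) function $\varphi$ by a genuinely analytic and bounded function without increasing the $\BMO$-distance by more than a fixed multiplicative constant. The natural candidate is the Riesz projection (Szeg\H{o} projection) $P_+$ onto the analytic part, but $P_+$ is not bounded on $L^\infty$; however, it \emph{is} bounded on $\BMO$, and more importantly it maps $\BMO$ to $\BMOA$ and fixes functions that are already analytic.

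First I would record the key facts: the Riesz projection $P_+ : \BMO \to \BMOA$ is bounded, say with norm $C_1$, and $P_+ g = g$ since $g \in \BMOA$. Applying $P_+$ to the difference gives $g - P_+\varphi = P_+(g - \varphi)$, so $\norm{g - P_+\varphi}_{\BMO} \le C_1 \norm{g-\varphi}_{\BMO} < C_1(\delta + \epsilon)$. The function $P_+\varphi$ lies in $\BMOA$ but need not be bounded, so this is not yet enough; it shows only that the distance from $g$ to $\BMOA$ measured in $\BMO$ is comparable to $\delta$, which is not the statement. I therefore need a second ingredient to pass from an element of $\BMOA$ that is $\BMO$-close to $g$ to an element of $H^\infty$ that is $\BMO$-close to $g$. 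The right tool is that $L^\infty \cap \BMOA$-type approximation: decompose $\varphi = \varphi_1 + \varphi_2$ where this does not quite work directly, so instead I would use the standard fact (Garnett, Chapter VI) that every $\BMO$ function can be written as $u + \tilde v$ with $u, v \in L^\infty$, combined with the observation that for a bounded function $u$ the analytic projection $P_+ u$ has conjugate function controlled, hence $P_+u - (\text{something bounded})$ is small — more cleanly, I would invoke the characterization that $f \in \BMOA$ is within distance $C\delta$ of $H^\infty$ iff $\log^+|f|$ integrates appropriately; but the cleanest route is the following.

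The clean route: use the fact that $\BMO = L^\infty + P_+(L^\infty) + \overline{P_+(L^\infty)}$ (up to the standard conjugate-function decomposition $\BMO = L^\infty + \widetilde{L^\infty}$), and that $H^\infty + \overline{H^\infty} + \C$ is, by the Sarason/Nehari-type theory, the preimage under the decomposition. Concretely: write $g - \varphi$, which is in $\BMO$ with small norm, in the form $u_0 + \tilde u_1$ with $u_0, u_1 \in L^\infty$ and $\norm{u_0}_\infty + \norm{u_1}_\infty \lesssim \norm{g-\varphi}_{\BMO}$. Then $g = \varphi + u_0 + \tilde u_1$, and applying $P_+$ and using $P_+ g = g$, $g = P_+\varphi + P_+ u_0 + P_+ \tilde u_1$. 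Now $P_+ \tilde u_1 = \frac{1}{i}(P_+ u_1 - \widehat{u_1}(0))$ roughly, and $P_+ u_0$, $P_+ u_1$ each lie in $\BMOA$ with norm controlled by $\norm{u_0}_\infty, \norm{u_1}_\infty$. The point is that $g - P_+(\varphi)$ is then a fixed linear combination of $P_+$ of bounded functions of small norm, and the distance of $P_+ u$ to $H^\infty$ for $u \in L^\infty$ is at most $\norm{u}_\infty$ — because $u - P_+ u = \overline{P_+ \bar u} - \widehat u(0)$ and... here one uses that $P_+ u = u - \overline{P_- \bar u}$-type identity shows $\norm{g}_{\text{dist to } H^\infty, \BMO} \lesssim \norm{g - \varphi}_{\BMO}$ after all, since $P_+\varphi$ differs from the bounded function $\varphi$ by something whose $\BMO$-norm is controlled. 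I expect the main obstacle to be organizing these projection identities cleanly so that the bounded function one finally subtracts from $g$ is honestly in $H^\infty$ (not just $\BMOA$) — i.e., getting the boundedness, not merely the $\BMOA$-membership, out of the argument. The resolution is that one never needs $P_+\varphi$ itself to be bounded: one writes $g = \varphi + (g - \varphi)$, does \emph{not} project $\varphi$, and instead shows directly that $\text{dist}_{\BMO}(g, H^\infty) \le \text{dist}_{\BMO}(g, H^\infty + \overline{H^\infty} + \C) + (\text{correction})$ where the correction comes from the boundedness of $P_+$ on $\BMO$ applied to $g - \varphi \in \BMO$, pushing the conjugate-analytic and constant parts into terms of controlled $\BMO$-norm. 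I would present this as: (1) reduce to bounding $\text{dist}_{\BMO}(g, H^\infty)$ by $C\delta$; (2) pick near-optimal $\varphi \in L^\infty$; (3) apply $P_+$, using $P_+ g = g$ and boundedness of $P_+$ on $\BMO$, to get $g - P_+\varphi$ small in $\BMO$; (4) handle $P_+\varphi$ by noting $\varphi \in L^\infty \subseteq \BMO$ so $\varphi - P_+\varphi = \overline{P_+\bar\varphi} - c$ has $\BMO$-norm $\le C\norm{\varphi}_\infty$ is the wrong bound — instead observe $P_+\varphi$ need not be subtracted if we argue via: $g$ is at $\BMO$-distance $\le C\delta$ from $\BMOA \cap (\text{translates of } L^\infty)$, and for such functions an $H^\infty$ approximant exists by the analytic projection being the identity on the analytic part while only contributing bounded conjugate-function terms. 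The honest obstacle, then, is exactly this last bookkeeping; everything else is soft functional analysis using the stated boundedness of $P_+$ on $\BMO$.
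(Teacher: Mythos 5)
There is a genuine gap, and it sits exactly at the step you defer as ``bookkeeping.'' The soft ingredients you list are fine: the Szeg\H{o} projection $P$ is bounded on $\BMO$, fixes $g\in\BMOA$, and gives $\norm{g-P\varphi}_{\BMO}\le C_1\norm{g-\varphi}_{\BMO}$ for a near-optimal $\varphi\in L^\infty$. But this leaves you needing an $H^\infty$ function at $\BMO$-distance $O(\delta)$ from $P\varphi$, and $P\varphi$ is just another $\BMOA$ function whose $\BMO$-distance to $L^\infty$ is $O(\delta)$ while $\norm{P\varphi}_{\BMO}$ and $\norm{\varphi}_\infty$ are not controlled by $\delta$. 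So the reduction is circular: the task for $P\varphi$ is verbatim the original task for $g$. The sub-claims you lean on do not rescue this: ``$\operatorname{dist}_{\BMO}(P u,H^\infty)\le\norm{u}_\infty$'' is only the trivial bound obtained from $0\in H^\infty$, useless when $u=\varphi$; and the fallback inequality $\operatorname{dist}_{\BMO}(g,H^\infty)\lesssim\operatorname{dist}_{\BMO}(g,H^\infty+\overline{H^\infty}+\C)+\cdots$ merely relocates the problem, since bounding $\operatorname{dist}_{\BMO}(g,H^\infty+\overline{H^\infty}+\C)$ by $C\delta$ cannot follow from the soft facts alone: $P$ maps $H^\infty+\overline{H^\infty}+\C$ into $H^\infty+\C$, so if every $\varphi\in L^\infty$ were $\BMO$-approximable from that sum, then $P(L^\infty)=\BMOA$ would lie in the $\BMO$-closure of $H^\infty$, which is false (e.g.\ $\log(1-z)$).

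The missing idea --- and the actual content of the lemma --- is Nehari's theorem: for $\varphi\in L^\infty$ one has $\operatorname{dist}_{L^\infty}(\varphi,H^\infty)=\sup\big\{\,\big|\int_\T f\varphi\,d\m\big| : f\in H^1_0,\ \norm{f}_{H^1}\le1\big\}$. This functional sees only the anti-analytic part of $\varphi$: normalizing $g(0)=0$ so that $P(\overline{g})=0$ and using that $P$ is self-adjoint, $\int_\T f\varphi\,d\m=\int_\T f\,\overline{P(\overline{\varphi-g})}\,d\m$, and Fefferman duality plus $P:\BMO\to\BMOA$ give the bound $\lesssim\norm{\varphi-g}_{\BMO}$. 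Thus the \emph{sup-norm} distance from $\varphi$ to $H^\infty$ is controlled by $\delta$; combining with $\norm{\cdot}_{\BMO}\le2\norm{\cdot}_\infty$ and the triangle inequality $\operatorname{dist}_{\BMO}(g,H^\infty)\le\norm{g-\varphi}_{\BMO}+\operatorname{dist}_{\BMO}(\varphi,H^\infty)$ finishes the proof. This is precisely the paper's argument; note that it never projects $\varphi$ at all, which is how it avoids the circularity your sketch runs into.
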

\begin{proof} Without loss of generality, we may assume that $g(0)=0$. Now let $\left\{h_{n}\right\}_{n\in \mathbb{N}}\subset L^{\infty}$ be a sequence which minimizes the $\BMOA$-distance from $g$ to $L^{\infty}$.
By simple triangle inequality, we have for all $n\in \mathbb{N}$
\begin{equation}\label{initial}\inf_{f\in H^{\infty}}   \norm{g-f}_{\BMO}   \leq     \norm{g-h_{n}}_{\BMO} + \inf_{f\in H^{\infty}}   \norm{h_{n} -f}_{\BMO} 
\end{equation}
To estimate the second norm, we use the straightforward continuous embedding $L^{\infty} \hookrightarrow \BMO$ and the relation $\left(H^{1}_{0}\right)^{\perp} = H^{\infty}$ to get 
\begin{equation}\label{xtremal} \inf_{f\in H^{\infty}} \, \norm{h_{n}-f}_{\BMO}   \leq 2   \inf_{f\in H^{\infty}}   \norm{h_{n}-f} _{\infty}   =   2   \sup_{\substack{f\in H^{1}_{0} \\ \norm{f}_{H^{1}} \leq 1} }   \abs{ \int  f    h_{n}   dm}
\end{equation}
Since the Szeg\"o projection $P$ is self-adjoint and $P(\overline{g})=0$, we can write 
\begin{equation}\label{cpair} \int_{\mathbb{T}}   f   h_{n}   dm  = \lim_{r\rightarrow 1-} \int_{\mathbb{T}} P(f)_{r} h_{n} dm = \lim_{r\rightarrow 1-}   \int_{\mathbb{T}}   f   \overline{P( \overline{h_{n}-g})_{r} }   dm  
\end{equation}
Now using duality on $(\ref{cpair})$ and the fact that $P: \BMO \rightarrow \BMOA$, we obtain
\begin{equation*}\sup_{\substack{f\in H^{1}_{0} \\ \norm{f}_{H^{1}} \leq 1} }   \abs{ \int  f    h_{n}   dm} \leq c \norm{P(\overline{h_{n}-g})}_{\BMO} \leq c'\norm{h_{n}-g}_{\BMO} 
\end{equation*}
Hence according to $(\ref{xtremal})$, we have established that $\inf_{f\in H^{\infty}}   \norm{h_{n}-f}_{\BMO}   \lesssim    \norm{h_{n} -g}_{\BMO}$, thus going back to $(\ref{initial})$, we ultimately arrive at
\begin{equation*} \inf_{f\in H^{\infty}}   \norm{g-f}_{\BMO}   \leq   C   \norm{g-h_{n}}_{\BMO} 
\end{equation*}
Letting $n\rightarrow \infty$, finishes the proof of the lemma.
\end{proof}
\begin{proof}[Proof of \thref{hinfclosurebmoa}]
Fix $0 < p < \infty$ and suppose we have an analytic function $g\in \BMOA$, with the property that $\sigma (T_{g} \lvert H^{p} ) \cap \left( \mathbb{R} \cup i  \mathbb{R} \right) = \left\{0 \right\}$. Set $w_{1} := \exp ( \Re(g) )$ and $w_{2} := \exp( \Im(g) ) $. Now since $\lambda, i\lambda \in \rho(T_{g} \lvert H^{p} ) $ for all $\lambda \in \mathbb{R} \setminus\left\{0\right\}$,  \thref{Ainftyresolventcond} yields that the weights $w_{1}^{1/\lambda}, w_{2}^{1/\lambda}$ both satisfy the $\A_{\infty}$-condition for all $\lambda \in \mathbb{R} \setminus\left\{0\right\}$. In particular, since both the weights $w_{j}^{1/\lambda}, w_{j}^{-1/\lambda}$ satisfy the $\A_{\infty}$-condition for $j=1,2$, the weights $w_{1}^{1/\lambda}, w_{2}^{1/\lambda}$ are in fact $\A_2$-weights, for all $\lambda >0$.
A well-known consequence of the Helson-Szeg\"o theorem (see \cite[Chapter VI, Section 6]{garnett}) is that there exists a constant $c>0$ such that for all real-valued $\phi\in \BMO$ we have
\begin{equation}\label{Gar}\frac{1}{c} \, \lambda(\phi) \leq \inf_{h\in L^{\infty} } \norm{\phi-h}_{\BMO} \leq c \lambda(\phi)
\end{equation}
where $\lambda(\phi) = \inf \left\{ \lambda >0  :  e^{\phi/\lambda} \in \A_{2} \right\}$.
Applying this fact to $\Re(g), \Im(g)\in \BMO$, we get that
$\inf_{h\in L^{\infty} } \norm{\Re(g)-h}_{\BMO} = \inf_{h\in L^{\infty} }  \norm{\Im(g)-h}_{\BMO} = 0$, thus $\inf_{h \in L^\infty} \norm{g - h}_{\BMO} = 0$. According to \thref{Sarason}, this is enough to conclude the proof. 

\end{proof}
%



Before moving on to the proof of \thref{Geospec}, we establish a preliminary lemma.

\begin{lemma}\label{starprop}
Let $X = H^{p}$ or $X=L^{p,\alpha}_{a}$ for some $0<p< \infty$. Then the spectrum $\sigma( T_{g} \lvert X )$ is star-shaped with respect to the origin, that is, if $\lambda \in \sigma( T_{g} \lvert X )$, then $r\lambda \in \sigma( T_{g} \lvert X )$ for each $r \in [0,1]$. In particular, the spectrum is always simply connected. 
\end{lemma}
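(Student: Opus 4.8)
The strategy is to exploit the resolvent characterizations of Theorems~\thref{Ainftyresolventcond} and \thref{B1resolventcond}, together with the fact that both the $\A_\infty$-class and the $B_\infty$-class are closed under log-convex combinations. Fix a nonzero $\lambda \in \sigma(T_g \lvert X)$ and suppose, for contradiction, that $r\lambda \in \rho(T_g \lvert X)$ for some $r \in (0,1)$. I would like to show that this forces $\lambda$ itself into the resolvent set, contradicting the assumption.

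\textbf{Key steps.} First I would observe that the point $0$ is always in the spectrum (noted in the introduction, since $T_g$ is never surjective when $X$ contains the constants), so it suffices to treat $r \in (0,1)$. Second, I would use that $\lambda = 0$ is trivially handled and that, since $\rho(T_g \lvert X)$ is open, if $r\lambda \in \rho(T_g \lvert X)$ then there is a whole neighborhood of $r\lambda$ in the resolvent set; in particular I may assume $r$ is such that the relevant weight condition holds with a bit of room to spare (this gives the freedom to raise the weight to a slightly larger power). Third — the crux — I would write the weight attached to the point $\lambda$ as a log-convex combination of the weight attached to $r\lambda$ and a trivial weight. Concretely, in the Hardy case the weight for the point $\mu$ is $w_\mu = \exp\big(p\Re(g/\mu)\big) = \exp\big(p\Re(g)\cos(\arg\mu)/|\mu| + p\Im(g)\sin(\arg\mu)/|\mu|\big)$; observe that for $\mu = \lambda$ and $\mu = r\lambda$ the argument is the same, so $w_\lambda = w_{r\lambda}^{\,r}$. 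Since $w_{r\lambda} \in \A_\infty$ and $\A_\infty$ is closed under raising to powers in $(0,1)$ (which follows from log-convexity by combining $w_{r\lambda}$ with the constant weight $1 \in \A_\infty$), we get $w_\lambda \in \A_\infty$. One also needs $e^{g/\lambda} \in H^p$, which follows from $e^{g/(r\lambda)} \in H^p$ by Jensen/Hölder since $|e^{g/\lambda}| = |e^{g/(r\lambda)}|^r \leq 1 + |e^{g/(r\lambda)}|$. By \thref{Ainftyresolventcond} this yields $\lambda \in \rho(T_g \lvert X)$, a contradiction. The Bergman case is identical, replacing the weight by $(1-|z|^2)^\alpha \exp\big(p\Re(g/\mu)\big)$; here the relation is $(1-|z|^2)^\alpha w_\lambda = \big((1-|z|^2)^\alpha w_{r\lambda}\big)^{r}\big((1-|z|^2)^\alpha\big)^{1-r}$, and since $(1-|z|^2)^\alpha$ is itself a $B_\infty$-weight, the log-convexity of $B_\infty$ closes the argument via \thref{B1resolventcond}.

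\textbf{Main obstacle.} The only delicate point is the membership $e^{g/\lambda}\in X$ (and its Bergman analogue): the weight condition alone is not sufficient for $\lambda\in\rho(T_g\lvert X)$, so one must separately verify the growth/integrability of $e^{g/\lambda}$. This is handled by the elementary pointwise bound $|e^{z}|^{r}\le 1+|e^z|$ for $r\in(0,1)$, which transfers integrability from $e^{g/(r\lambda)}$ to $e^{g/\lambda}=\big(e^{g/(r\lambda)}\big)^{r}$. The simple connectedness then follows immediately: a compact star-shaped subset of $\C$ has connected complement, hence is simply connected.
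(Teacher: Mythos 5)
Your proposal is correct and follows essentially the same route as the paper: a contradiction argument via Theorems~\thref{Ainftyresolventcond} and \thref{B1resolventcond}, using the identity $w_\lambda = w_{r\lambda}^{\,r}$ and the log-convexity of the $\A_\infty$/$B_\infty$ classes (with the constant weight, resp. $(1-|z|^2)^\alpha$, as the second factor — the paper writes this out as an explicit H\"older estimate). Your handling of the integrability of $e^{g/\lambda}$ via the pointwise bound $x^r \le 1+x$ is a harmless variant of the paper's H\"older argument.
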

\begin{proof} \sloppy  Without loss of generality we may assume that $\sigma (T_{g} \lvert  X  )\setminus\left\{0\right\}$ is non-trivial. Now suppose that there exists $0<r<1$ and a non-zero point $\lambda_{0} \in \sigma (T_{g}  \lvert  X )$, such that $r\lambda_{0} \in \rho ( T_{g} \lvert X )$. We will show that this implies that $\lambda_0$ is not in the spectrum, and thus the claim of the lemma will be established. 
We shall carry out the proof in the case $X=L^{p,\alpha}_{a}$. According to \thref{B1resolventcond} it suffices to establish that $e^{g/\lambda_{0}}\in L^{p,\alpha}_{a}$ and the weight $w_{\alpha}(z) = v_{\alpha}(z) \exp\left(p\Re(g(z)/\lambda_{0}) \right)$ satisfies the $B_{\infty}$-condition, where $v_{\alpha}(z)=(1-\abs{z}^{2})^{\alpha}$. Since $e^{g/r\lambda_{0}}\in L^{p,\alpha}_{a}$ by the assumption $r\lambda_{0} \in \rho ( T_{g} \lvert  L^{p,\alpha}_{a}  )$, the first assertion follows immediately from H\"older's inequality. For the second assertion, we assumed that $v_{\alpha}   \exp \left(p \Re(g/r\lambda_{0}))\right) $ satisfies the $B_{\infty}$-condition, hence applying H\"older's inequality followed by the $B_{\infty}$-condition, we obtain
\begin{gather*} \frac{1}{A(S_{I})}   \int_{S_{I}} w_{\alpha}   dA  =\frac{1}{A(S_{I})}   \int_{S_{I}} v_{\alpha}^{(1-r)\alpha}  v_{\alpha}^{r\alpha}    \exp\left(p\Re(g/\lambda_{0}) \right)    dA 
\\  \leq    \left(\frac{1}{A(S_{I})}   \int_{S_{I}} v_{\alpha}    \exp\left(p\Re(g/r\lambda_{0}) \right)    dA \right)^{r}    \left(  \frac{1}{A(S_{I})}   \int_{S_{I}} v_{\alpha}   dA \right)^{1-r}  
\end{gather*}\begin{gather*} \lesssim    \exp \left(\frac{1}{A(S_{I})}   \int_{S_{I}}  \log(w_{\alpha})   dA \right) \times \\ 
\left[ \exp\left( \frac{1}{A(S_{I})}   \int_{S_{I}}  \log(v^{-1}_{\alpha})   dA \right)     \frac{1}{A(S_{I})}   \int_{S_{I}} v_{\alpha}   dA  \right]^{1-r} \\ \lesssim \exp \left(\frac{1}{A(S_{I})}   \int_{S_{I}}  \log(w_{\alpha})   dA \right)
\end{gather*}
In the last step we also used the $B_{\infty}$-condition on the standard weights $v_{\alpha}$, which follows from the simple fact that the Bergman projection maps $P_{\alpha} : L^{p,\alpha}_{a} \rightarrow L^{p,\alpha}_{a}$. Hence by \thref{B1resolventcond}, we conclude that $\lambda_{0} \in \rho ( T_{g} \lvert L^{p,\alpha}_{a} )$, which contradicts our initial assumption, thus the theorem is proved in the case $X= L^{p, \alpha}_{a}$. The case $X=H^{p}$ is more straightforward and treated analogously, using instead the characterization of \thref{Ainftyresolventcond}.
\end{proof}

\begin{proof}[Proof of \thref{Geospec}]
This time, we carry out the proof in the case $X=H^{p}$, where the Bergman case is similar. Pick $\lambda \in \sigma(T_{g}  \lvert H^{p})\setminus \left\{0\right\}$. By means of multiplying $g$ with a unimodular constant, which corresponds to rotating the spectrum, we may without loss of generality assume that $\lambda>0$. Now fix $0<r<r'<1$ and consider the circular arc parametrization $\gamma_{r}(t) = r \lambda e^{it}$, $-\pi < t\leq \pi$. Notice that 
\begin{equation} \Re\left( \frac{g}{\gamma_{r}(t)} \right) = \frac{\Re(g)}{r\lambda} \cos(t) + \frac{\Im(g)}{r\lambda} \sin(t) = \frac{\Re(g)}{\frac{r}{\cos(t)}\cdot \lambda} + \frac{\Im(g)}{\frac{r\lambda}{\sin(t)}}
\end{equation}
Choose $t$ sufficiently small, such that $\frac{r}{\cos(t)} \leq r'$ and $\frac{r(1-r')\lambda}{r' \abs{\sin(t)}} > \abs{\sigma(T_{g}|H^p )}$ and denote this interval by $J_{r , \lambda}$. Then by \thref{Ainftyresolventcond} and the star-shaped property of the spectrum, we have that the weight 
\begin{equation}\label{uweight}u^{r'}_{t}= \exp\left(\frac{\Re(g)}{\frac{r}{r'  \cos(t)}\cdot \lambda} \right)\notin \mathcal{A}_{\infty} \qquad \forall t\in J_{r,\lambda}
\end{equation} while $v^{r'/(1-r')}_{t}=  \exp\left(\frac{\Im(g)}{\frac{r(1-r')\lambda}{r' \sin(t)}} \right)\in \mathcal{A}_{\infty}$. Since the assumptions are symmetric in $t$ and the sine-function is odd, we have that $v_{-t}= v_{t}^{-1}$, thus in fact, $v^{r'/(1-r')}_{t}$ is a Muckenhoupt $\mathcal{A}_{2}$-weight. Setting $w_{t} =  \exp\left(\Re\left( \frac{g}{\gamma_{r}(t)} \right)\right) $ we can rewrite $v_{t}^{-1}  w_{t} = u_{t}$. For the sake of obtaining a contradiction, suppose $w_{t_{0}}\in \mathcal{A}_{\infty}$, for some $t_{0}\in J_{r,\lambda}$, which by \thref{Ainftyresolventcond} corresponds to the assumption that $\gamma_{r}(t_{0}) \in \rho ( T_{g} \lvert H^{p} )$. Since the $\mathcal{A}_{\infty}$-class is closed under log-convex combinations, we get 
\begin{equation}u_{t_{0}}^{r'} = v_{t_{0}}^{-r'} w_{t_{0}}^{r'}= \left(\underbrace{v_{t_{0}}^{-r'/(1-r')}}_{\in \mathcal{A}_{\infty}}\right)^{1-r'} w_{t_{0}}^{r'} \in \mathcal{A}_{\infty}
\end{equation}
which contradicts $(\ref{uweight})$. We have established that whenever $t\in J_{r,\lambda}$, i.e. $$\abs{t} \leq  \min \left\{ \arccos(r/r'), \arcsin\left( r(1-r') \lambda/ r'  \abs{\sigma(T_{g} )} \right) \right\}$$ then the corresponding circular arc $I_{r,\lambda} \subseteq \sigma( T_{g} \lvert  H^{p} )$. Taking convex hulls of the origin and the circular arc $I_{r,\lambda}$, we conclude by Lemma $\ref{starprop}$ that the corresponding circular sector $S_{r,\lambda}$ is included in the spectrum. 
\end{proof}

\textbf{Acknowledgement.} The authors want to express their gratitude towards Alexandru Aleman, for our useful discussions and his ideas surrounding this work, in particular a suggestion for an elegant proof of \thref{zerospec}.

\bibliographystyle{siam}
\bibliography{mybib}

\end{document}